\documentclass[11pt]{article}\UseRawInputEncoding
\usepackage{amssymb,amsfonts,amsmath,latexsym,epsf,tikz,url}
\usepackage[usenames,dvipsnames]{pstricks}
\usepackage{pstricks-add}
\usepackage{epsfig}
\usepackage{pst-grad} % For gradients
\usepackage{pst-plot} % For axes
\usepackage[space]{grffile} % For spaces in paths
\usepackage{etoolbox} % For spaces in paths
\makeatletter % For spaces in paths
\patchcmd\Gread@eps{\@inputcheck#1 }{\@inputcheck"#1"\relax}{}{}
\makeatother

\newtheorem{theorem}{Theorem}[section]

\newtheorem{corollary}[theorem]{Corollary}

\newtheorem{remark}[theorem]{Remark}
\newtheorem{example}[theorem]{Example}

\newtheorem{problem}[theorem]{Problem}
\newcommand{\proof}{\noindent{\bf Proof.\ }}
\newcommand{\qed}{\hfill $\square$\medskip}

\textwidth 14.5cm
\textheight 21.0cm
\oddsidemargin 0.4cm
\evensidemargin 0.4cm
\voffset -1cm

\begin{document}

\title{Strong domination number of some operations on a graph}

\author{
Saeid Alikhani$^{1,}$\footnote{Corresponding author}  
\and
Nima Ghanbari$^{2}$
\and
Hassan Zaherifar$^{1}$
}

\date{\today}

\maketitle

\begin{center}
$^{1}$Department of Mathematical Sciences, Yazd University, 89195-741, Yazd, Iran\\
$^2$Department of Informatics, University of Bergen, P.O. Box 7803, 5020 Bergen, Norway\\
{\tt alikhani@yazd.ac.ir ~~ Nima.Ghanbari@uib.no ~~ hzaherifar@gmail.com}
\end{center}

%%%%%%%%%%%%%%ABSTRACT%%%%%%%%%%%%%%%%%%%%%%%%%%%%%%%%%%%%%%%%%%%%%%%%%%%%%%%%%%%%
\begin{abstract}
 Let $G=(V(G),E(G))$ be a simple graph. A set $D\subseteq V(G)$ is a strong dominating set of $G$, if for every vertex $x\in V(G)\setminus D$ there is a vertex $y\in D$ with $xy\in E(G)$ and $deg(x)\leq deg(y)$. The strong domination number $\gamma_{st}(G)$ is defined as the minimum cardinality of a strong dominating set.  In this paper, we examine the effects on $\gamma_{st}(G)$ when $G$ is modified by operations on edge (or edges) of $G$.  
\end{abstract}

\noindent{\bf Keywords:}  edge deletion, edge subdivision, edge contraction, strong domination number.

\medskip
\noindent{\bf AMS Subj.\ Class.:} 05C15, 05C25

%%%%%%%%%%%%%%%%%%%%%%%%%%%%%%%%%%%%%%%%%%%%%%%%%%%%%%%%%%%%%%%%%%%%%%%%%%%%%%%%%
%%%%%%%%%%%%%%%%%%%%%%%%%%%%%%%%%%%%%%%%%%%%%%%%%%%%%%%%%%%%%%%%%%%%%%%%%%%%%%%%%
\section{Introduction}

A dominating set of a graph $G=(V,E)$ is any subset $D$ of $V$ such that every vertex not in $D$ is adjacent to at least one member of $D$.  
The minimum cardinality of all dominating sets of $G$ is called  the  domination number of $G$ and is denoted by $\gamma(G)$. This parameter has  been extensively studied in the literature and there are  hundreds of papers concerned with domination.  
For a detailed treatment of domination theory, the reader is referred to \cite{domination}. Also, the concept of domination and related invariants have
been generalized in many ways.

The {corona product} $G\circ H$ of two graphs $G$ and $H$ is defined as the graph obtained by taking one copy of $G$ and $\vert V(G)\vert $ copies of $H$ and joining the $i$-th vertex of $G$ to every vertex in the $i$-th copy of $H$. 

 A set $D\subseteq V(G)$ is a strong dominating set of $G$, if for every vertex $x\in V(G)\setminus D$ there is a vertex $y\in D$ with $xy\in E(G)$ and $deg(x)\leq deg(y)$. The strong
domination number $\gamma_{st}(G)$ is defined as the minimum cardinality of a strong dominating set. 
A strong dominating set with cardinality $\gamma_{st}(G)$ is called a $\gamma_{st}$-set.
The strong domination number was introduced in \cite{DM} and some upper bounds on this parameter presented in \cite{DM2,DM}. Similar to strong domination number, a set $D\subset V$  is a weak  dominating set of $G$ if every vertex $v\in V\setminus S$  is
adjacent to a vertex $u\in D$ such that $deg(v)\geq deg(u)$ (see \cite{Boutrig}). The minimum cardinality of a weak dominating set of $G$ is denoted by $\gamma_w(G)$. Boutrig and  Chellali proved that the relation $\gamma_w(G)+\frac{3}{\Delta+1}\gamma_{st}(G)\leq n$ holds for any connected graph of order $n\geq 3.$

 Motivated by counting of the number of dominating sets of a graph and domination polynomial (see e.g. \cite{euro,saeid1}), recently, we have studied the number of the strong dominating sets for certain
graphs \cite{JAS}.  

Let $e$ be an edge of a connected simple graph $G$. The graph obtained by removing  an edge $e$ from $G$ is denoted by $G-e$.
The edge subdivision operation for an edge $\{u,v\}\in E$ is the deletion of $\{u,v\}$ from $G$ and the addition of two edges $\{u,w\}$ and $\{w,v\}$ along with the new vertex $w$. 
A graph which has been derived from $G$ by an edge subdivision operation for edge $e$ is denoted by $G_e$.  The $k$-subdivision of $G$, denoted by $G^{\frac{1}{k}}$, is constructed by replacing each edge $v_iv_j$ of $G$ with a path of length $k$. 
An edge contraction is an operation that removes an edge from a graph while simultaneously merging the two vertices that it previously joined. The resulting induced graph is written as $G/e$. %For some results on these operations, we refer the reader to \cite{Nima,Nima1}. 

\medskip 
 In the next section,   we examine the effects on $\gamma_{st}(G)$ when $G$ is modified by operations edge deletion, edge subdivision and edge contraction. Also we study the strong domination number of $k$-subdivision of $G$ in Section 3.

 \section{Strong domination number of some operations on a graph}
 
  In this section, we study the relations between the strong domination number of $G,G-e, G_e$ and $G/e$. First we consider the edge deletion.

 \subsection{Edge deletion}
 
 We begin with the following result:

 \begin{theorem}\label{edge-deletion}
 	Let $G=(V,E)$ be a graph which is not $K_2$, and $e=uv\in E$. Then,
 	$$\gamma_{st} (G)-1\leq \gamma_{st}(G-e)\leq \gamma_{st} (G)+\deg (u)+\deg (v)-2.$$	
 \end{theorem}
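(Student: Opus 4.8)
The plan is to establish the lower bound $\gamma_{st}(G-e)\geq\gamma_{st}(G)-1$ and the upper bound $\gamma_{st}(G-e)\leq\gamma_{st}(G)+\deg(u)+\deg(v)-2$ separately. For the lower bound, I would start from a $\gamma_{st}$-set $D'$ of $G-e$ and show that $D'\cup\{u\}$ (or $D'\cup\{v\}$) is a strong dominating set of $G$; since adding the single vertex $u$ to $D'$ costs only one extra vertex, this gives $\gamma_{st}(G)\leq\gamma_{st}(G-e)+1$, which is the claimed inequality. For the upper bound, I would start from a $\gamma_{st}$-set $D$ of $G$ and repair it into a strong dominating set of $G-e$ by throwing in enough vertices near $u$ and $v$; the count $\deg(u)+\deg(v)-2$ suggests adding (roughly) all neighbours of $u$ and all neighbours of $v$ except the two endpoints themselves, or perhaps $\{u,v\}$ together with the neighbourhoods, adjusting so the arithmetic matches.

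\medskip
\textbf{Proving the lower bound carefully.} Let $D'$ be a $\gamma_{st}$-set of $G-e$ and set $D=D'\cup\{u\}$. I need: for every $x\in V\setminus D$ there is $y\in D$ with $xy\in E(G)$ and $\deg_G(x)\leq\deg_G(y)$. The subtlety is that degrees change: $\deg_G(u)=\deg_{G-e}(u)+1$, $\deg_G(v)=\deg_{G-e}(v)+1$, and all other degrees are unchanged. Take $x\in V\setminus D$; then $x\neq u$, and $x$ is strongly dominated in $G-e$ by some $y\in D'$ with $xy\in E(G-e)\subseteq E(G)$ and $\deg_{G-e}(x)\leq\deg_{G-e}(y)$. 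If $x\neq v$ then $\deg_G(x)=\deg_{G-e}(x)$; the only way the inequality $\deg_G(x)\leq\deg_G(y)$ could fail is if $y=v$ and $\deg_{G-e}(x)=\deg_{G-e}(v)$, in which case $\deg_G(v)=\deg_{G-e}(v)+1>\deg_{G-e}(x)=\deg_G(x)$, so it actually still holds. If $x=v$, then $v$ has a neighbour $u\in D$ in $G$, and $\deg_G(v)=\deg_{G-e}(v)+1\leq\deg_{G-e}(u)+1=\deg_G(u)$ — wait, this needs $\deg_{G-e}(v)\leq\deg_{G-e}(u)$, which may fail; in that case I should instead have chosen to add $v$ rather than $u$ at the outset. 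So the clean statement is: add whichever of $u,v$ has the larger degree in $G$ (equivalently in $G-e$). I would verify both cases symmetrically.

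\medskip
\textbf{Proving the upper bound carefully.} Let $D$ be a $\gamma_{st}$-set of $G$ and put $D^\star=D\cup N_G(u)\cup N_G(v)$ with $u,v$ themselves handled so the total added is at most $\deg(u)+\deg(v)-2$ (note $u\in N_G(v)$ and $v\in N_G(u)$ and each might already lie in $D$, which only helps the count). I claim $D^\star$ strongly dominates $G-e$. For $x\notin D^\star$: in particular $x\notin N_G(u)\cup N_G(v)$, so $x$ is not incident to $e$ and $\deg_{G-e}(x)=\deg_G(x)$; also $x$ was strongly dominated in $G$ by some $y\in D$ with $xy\in E(G)$ and $\deg_G(x)\leq\deg_G(y)$. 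Since $x\notin N(u)\cup N(v)$, the edge $xy$ is not $e$, so $xy\in E(G-e)$. The remaining worry is $\deg_{G-e}(y)$ versus $\deg_G(y)$: this drops by one only if $y\in\{u,v\}$, but then $y\in N_G(u)\cup N_G(v)\subseteq D^\star$ would force $x$ to be adjacent to $u$ or $v$, i.e. $x\in N_G(u)\cup N_G(v)$, contradicting $x\notin D^\star$. Hence $\deg_{G-e}(y)=\deg_G(y)\geq\deg_G(x)=\deg_{G-e}(x)$, and $x$ is strongly dominated. The main obstacle throughout is bookkeeping the degree shifts of $u$ and $v$ and making sure the "$+1$" never breaks an inequality in the wrong direction; the hypothesis $G\neq K_2$ is needed so that $\gamma_{st}(G)\geq1$ and the lower bound is not vacuous or negative in a degenerate way. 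I would close by checking tightness examples (e.g. stars, or $K_2$ excluded) if space permits.
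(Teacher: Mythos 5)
Your lower bound is correct and is essentially the paper's argument in unified form: the paper splits into the three cases $u,v\in S$; $u\in S,\ v\notin S$; $u,v\notin S$ and in each case adds at most one endpoint of $e$ to a $\gamma_{st}$-set $S$ of $G-e$, while you achieve the same thing in one stroke by adding the endpoint of larger degree and observing that, when $e$ is restored, degrees of potential dominators can only increase. That half of the proposal is sound.

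The upper bound, however, has a genuine gap, and it is exactly at the spot you flag with ``with $u,v$ themselves handled so the total added is at most $\deg(u)+\deg(v)-2$.'' Your verification that $D^\star=D\cup N_G(u)\cup N_G(v)$ strongly dominates $G-e$ is correct, but the cardinality claim is not automatic. If $u,v\in D$, the two endpoints are absorbed and at most $\deg(u)+\deg(v)-2$ new vertices are added; if $u,v\notin D$, there is nothing to do at all, since no strong-domination relation used the edge $e$ or a degree that drops, so $D$ itself works for $G-e$. But in the mixed case, say $u\in D$ and $v\notin D$, the only guaranteed saving in $N_G(u)\cup N_G(v)$ is $u$ itself: if $N_G(u)\cap N_G(v)=\emptyset$ and $D\cap\left(N_G(u)\cup N_G(v)\right)=\{u\}$ (which happens, e.g., when $v$ is strongly dominated only by $u$ and no other neighbour of $u$ lies in $D$), then $\left|\left(N_G(u)\cup N_G(v)\right)\setminus D\right|=\deg(u)+\deg(v)-1$, one more than the bound allows. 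Closing this off-by-one is the nontrivial content of the theorem's ``$-2$,'' and it is what the paper's proof spends its effort on: it deletes $u$ and $v$ (or, when they are still useful as dominators, equal-degree vertices $u''$, $v'''$ that they dominate) from the enlarged set and then re-verifies that the deleted vertices are still strongly dominated, e.g. $u$, whose degree has dropped, is now dominated by a former equal-degree neighbour $u'$ that was added. Note that once you delete vertices to repair the count, your check (which only treats $x\notin N_G(u)\cup N_G(v)$, precisely because everything in $N_G(u)\cup N_G(v)$ was assumed to be in $D^\star$) no longer covers all vertices, so this re-verification, or a separate and more economical repair in the mixed case, is genuinely needed and is missing from your proposal.
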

 
 \begin{proof}
 	First we find the upper bound for $\gamma_{st}(G-e)$. Suppose that $D$ is a strong dominating set of $G$. In the worst case, both vertices $u$ and $v$ are in $D$ and $u$ has the same degree as some of its neighbours (except $v$) and strong dominates them, and the same for $v$. Suppose that $u'$ is adjacent to $u$, $u'\neq v$, $\deg(u)=\deg(u')$, and $u'$ is strong dominated only by $u$. Then, by removing $e$, there is no vertex such that strong dominates $u'$. So, we remove $u$ from $D$ and put all of its neighbours in $D$. Now, $u$ is strong dominated by at least $u'$. We have the same argument for $v$ too. So, by considering 
 	$$D'=\left( D \cup N(u) \cup N(v) \right) \setminus \{u,v\},$$
 	in this case, we have a strong dominating set. 
 	If we can keep $u$ in our strong dominating set to strong dominate at least one vertex (say $u''$), but condition for $v$ be the same as before, then we consider 
 	$$D''=\left( D \cup N(u) \cup N(v) \right) \setminus \{u'',v\}, $$
 	and we are done. If we can keep $u$ in our strong dominating set to strong dominate at least one vertex (say $u'''$), and keep $v$ in our strong dominating set to strong dominate at least one vertex (say $v'''$), then we consider 
 	$$D'''=\left( D \cup N(u) \cup N(v) \right) \setminus \{u''',v'''\}, $$
 	and we have a strong dominating set. 
 	Hence, in all cases, we have  
 	$$\gamma_{st}(G-e)\leq \gamma_{st} (G)+\deg (u)+\deg (v)-2.$$
 	Note that if $u\in D$ and $v\notin D$, then after removing $e$, we just need add $v$ to $D$ and the inequality holds for this condition too. If $u,v\notin D$, then after removing $e$, they are strong dominated by the same vertices as before. 
 	
 	Now, we find a lower bound for $\gamma_{st}(G-e)$. First we remove $e$ and find a strong dominating set for $G-e$. Suppose that this set is $S$. We have the following cases:
 	\begin{itemize}
 		\item[(i)]
 		$u,v\in S$. In this case, adding edge $e$ does not make any difference and $S$ is a strong dominating set of $G$ too. So $\gamma_{st}(G)\leq \gamma_{st} (G-e)$.
 		\item[(ii)]
 		$u\in S$ and $v\notin S$. In this case, after adding edge $e$, let $S'=S\cup  \{v\}$. One can easily check that $S'$ is a strong dominating set of $G$, and  $\gamma_{st}(G)\leq \gamma_{st} (G-e)+1$.
 		
 		\item[(iii)]
 		$u,v\notin S$. Without loss of generality, suppose that $\deg (u) \leq \deg(v)$. After adding edge $e$, let $S''=S\cup  \{v\}$. Then, $u$ is strong dominated by $v$ and all other vertices in $V(G)\setminus S'$ are strong dominated as before. Hence, $S''$ is a strong dominating set of $G$, and  $\gamma_{st}(G)\leq \gamma_{st} (G-e)+1$.
 	\end{itemize}
 	Therefore in all cases we have $\gamma_{st}(G-e)\geq \gamma_{st} (G)-1$, and  we have the result. 	\qed
 \end{proof}

 \begin{figure}
 	\begin{center}
 		\psscalebox{0.55 0.55}
 		{
 			\begin{pspicture}(0,-7.6000004)(13.202778,4.8027782)
 			\psline[linecolor=black, linewidth=0.08](5.8013887,-0.99861085)(7.4013886,-0.99861085)(7.4013886,-0.99861085)
 			\psline[linecolor=black, linewidth=0.08](7.4013886,-0.99861085)(8.601389,1.4013891)(8.601389,1.4013891)
 			\psline[linecolor=black, linewidth=0.08](7.4013886,-0.99861085)(8.601389,-0.99861085)(8.601389,-0.99861085)
 			\psline[linecolor=black, linewidth=0.08](7.4013886,-0.99861085)(8.601389,-3.3986108)(8.601389,-3.3986108)
 			\psline[linecolor=black, linewidth=0.08](8.601389,-0.99861085)(9.801389,-0.19861084)(9.801389,-0.19861084)
 			\psline[linecolor=black, linewidth=0.08](8.601389,-0.99861085)(9.801389,-0.99861085)(9.801389,-0.99861085)
 			\psline[linecolor=black, linewidth=0.08](8.601389,-0.99861085)(9.801389,-1.7986108)(9.801389,-1.7986108)
 			\psline[linecolor=black, linewidth=0.08](8.601389,-3.3986108)(9.801389,-2.5986109)(9.801389,-2.5986109)
 			\psline[linecolor=black, linewidth=0.08](8.601389,-3.3986108)(9.801389,-3.3986108)(9.801389,-3.3986108)
 			\psline[linecolor=black, linewidth=0.08](8.601389,-3.3986108)(9.801389,-4.198611)(9.801389,-4.198611)
 			\psline[linecolor=black, linewidth=0.08](8.601389,1.4013891)(9.801389,1.4013891)(9.801389,1.4013891)
 			\psline[linecolor=black, linewidth=0.08](8.601389,1.4013891)(9.801389,2.201389)(9.801389,2.201389)
 			\psline[linecolor=black, linewidth=0.08](8.601389,1.4013891)(9.801389,0.60138917)(9.801389,0.60138917)
 			\psline[linecolor=black, linewidth=0.08](9.801389,2.201389)(13.001389,2.201389)(13.001389,2.201389)
 			\psline[linecolor=black, linewidth=0.08](9.801389,1.4013891)(13.001389,1.4013891)(13.001389,1.4013891)
 			\psline[linecolor=black, linewidth=0.08](9.801389,0.60138917)(13.001389,0.60138917)(13.001389,0.60138917)
 			\psline[linecolor=black, linewidth=0.08](9.801389,-0.19861084)(13.001389,-0.19861084)(13.001389,-0.19861084)
 			\psline[linecolor=black, linewidth=0.08](9.801389,-0.99861085)(13.001389,-0.99861085)(13.001389,-0.99861085)
 			\psline[linecolor=black, linewidth=0.08](9.801389,-1.7986108)(13.001389,-1.7986108)(13.001389,-1.7986108)
 			\psline[linecolor=black, linewidth=0.08](9.801389,-2.5986109)(13.001389,-2.5986109)(13.001389,-2.5986109)
 			\psline[linecolor=black, linewidth=0.08](9.801389,-3.3986108)(13.001389,-3.3986108)(13.001389,-3.3986108)
 			\psline[linecolor=black, linewidth=0.08](9.801389,-4.198611)(13.001389,-4.198611)(13.001389,-4.198611)
 			\psline[linecolor=black, linewidth=0.08](4.601389,3.4013891)(3.401389,3.0013893)(3.401389,3.0013893)
 			\psline[linecolor=black, linewidth=0.08](4.601389,3.4013891)(3.401389,3.8013892)(3.401389,3.8013892)
 			\psline[linecolor=black, linewidth=0.08](4.601389,3.4013891)(3.401389,4.601389)(3.401389,4.601389)
 			\psline[linecolor=black, linewidth=0.08](4.601389,3.4013891)(3.401389,2.201389)(3.401389,2.201389)
 			\psline[linecolor=black, linewidth=0.08](4.601389,0.20138916)(3.401389,-0.19861084)(3.401389,-0.19861084)
 			\psline[linecolor=black, linewidth=0.08](4.601389,0.20138916)(3.401389,0.60138917)(3.401389,0.60138917)
 			\psline[linecolor=black, linewidth=0.08](4.601389,0.20138916)(3.401389,1.4013891)(3.401389,1.4013891)
 			\psline[linecolor=black, linewidth=0.08](4.601389,0.20138916)(3.401389,-0.99861085)(3.401389,-0.99861085)
 			\psline[linecolor=black, linewidth=0.08](4.601389,-2.9986107)(3.401389,-3.3986108)(3.401389,-3.3986108)
 			\psline[linecolor=black, linewidth=0.08](4.601389,-2.9986107)(3.401389,-2.5986109)(3.401389,-2.5986109)
 			\psline[linecolor=black, linewidth=0.08](4.601389,-2.9986107)(3.401389,-1.7986108)(3.401389,-1.7986108)
 			\psline[linecolor=black, linewidth=0.08](4.601389,-2.9986107)(3.401389,-4.198611)(3.401389,-4.198611)
 			\psline[linecolor=black, linewidth=0.08](4.601389,-6.198611)(3.401389,-6.598611)(3.401389,-6.598611)
 			\psline[linecolor=black, linewidth=0.08](4.601389,-6.198611)(3.401389,-5.7986107)(3.401389,-5.7986107)
 			\psline[linecolor=black, linewidth=0.08](4.601389,-6.198611)(3.401389,-4.998611)(3.401389,-4.998611)
 			\psline[linecolor=black, linewidth=0.08](4.601389,-6.198611)(3.401389,-7.398611)(3.401389,-7.398611)
 			\psline[linecolor=black, linewidth=0.08](5.8013887,-0.99861085)(4.601389,3.4013891)(4.601389,3.4013891)
 			\psline[linecolor=black, linewidth=0.08](5.8013887,-0.99861085)(4.601389,0.20138916)(4.601389,0.20138916)
 			\psline[linecolor=black, linewidth=0.08](5.8013887,-0.99861085)(4.601389,-2.9986107)(4.601389,-2.9986107)
 			\psline[linecolor=black, linewidth=0.08](5.8013887,-0.99861085)(4.601389,-6.198611)(4.601389,-6.198611)
 			\psline[linecolor=black, linewidth=0.08](0.20138885,4.601389)(3.401389,4.601389)(3.401389,4.601389)
 			\psline[linecolor=black, linewidth=0.08](0.20138885,3.8013892)(3.401389,3.8013892)(3.401389,3.8013892)
 			\psline[linecolor=black, linewidth=0.08](0.20138885,3.0013893)(3.401389,3.0013893)(3.401389,3.0013893)
 			\psline[linecolor=black, linewidth=0.08](0.20138885,1.4013891)(3.401389,1.4013891)(3.401389,1.4013891)
 			\psline[linecolor=black, linewidth=0.08](0.20138885,0.60138917)(3.401389,0.60138917)(3.401389,0.60138917)
 			\psline[linecolor=black, linewidth=0.08](0.20138885,-0.19861084)(3.401389,-0.19861084)(3.401389,-0.19861084)
 			\psline[linecolor=black, linewidth=0.08](0.20138885,-1.7986108)(3.401389,-1.7986108)(3.401389,-1.7986108)
 			\psline[linecolor=black, linewidth=0.08](0.20138885,-2.5986109)(3.401389,-2.5986109)(3.401389,-2.5986109)
 			\psline[linecolor=black, linewidth=0.08](0.20138885,-3.3986108)(3.401389,-3.3986108)(3.401389,-3.3986108)
 			\psline[linecolor=black, linewidth=0.08](0.20138885,-4.998611)(3.401389,-4.998611)(3.401389,-4.998611)
 			\psline[linecolor=black, linewidth=0.08](0.20138885,-5.7986107)(3.401389,-5.7986107)(3.401389,-5.7986107)
 			\psline[linecolor=black, linewidth=0.08](0.20138885,-6.598611)(3.401389,-6.598611)(3.401389,-6.598611)
 			\psline[linecolor=black, linewidth=0.08](0.20138885,-7.398611)(3.401389,-7.398611)(3.401389,-7.398611)
 			\psline[linecolor=black, linewidth=0.08](0.20138885,2.201389)(3.401389,2.201389)(3.401389,2.201389)
 			\psline[linecolor=black, linewidth=0.08](0.20138885,-0.99861085)(3.401389,-0.99861085)(3.401389,-0.99861085)
 			\psline[linecolor=black, linewidth=0.08](0.20138885,-4.198611)(3.401389,-4.198611)(3.401389,-4.198611)
 			\psdots[linecolor=black, dotsize=0.4](5.8013887,-0.99861085)
 			\psdots[linecolor=black, dotsize=0.4](7.4013886,-0.99861085)
 			\psdots[linecolor=black, dotsize=0.4](11.401389,2.201389)
 			\psdots[linecolor=black, dotsize=0.4](11.401389,1.4013891)
 			\psdots[linecolor=black, dotsize=0.4](11.401389,0.60138917)
 			\psdots[linecolor=black, dotsize=0.4](11.401389,-0.19861084)
 			\psdots[linecolor=black, dotsize=0.4](11.401389,-0.99861085)
 			\psdots[linecolor=black, dotsize=0.4](11.401389,-1.7986108)
 			\psdots[linecolor=black, dotsize=0.4](11.401389,-2.5986109)
 			\psdots[linecolor=black, dotsize=0.4](11.401389,-3.3986108)
 			\psdots[linecolor=black, dotsize=0.4](11.401389,-4.198611)
 			\psdots[linecolor=black, dotsize=0.4](1.8013889,4.601389)
 			\psdots[linecolor=black, dotsize=0.4](1.8013889,3.8013892)
 			\psdots[linecolor=black, dotsize=0.4](1.8013889,3.0013893)
 			\psdots[linecolor=black, dotsize=0.4](1.8013889,2.201389)
 			\psdots[linecolor=black, dotsize=0.4](1.8013889,1.4013891)
 			\psdots[linecolor=black, dotsize=0.4](1.8013889,0.60138917)
 			\psdots[linecolor=black, dotsize=0.4](1.8013889,-0.19861084)
 			\psdots[linecolor=black, dotsize=0.4](1.8013889,-0.99861085)
 			\psdots[linecolor=black, dotsize=0.4](1.8013889,-1.7986108)
 			\psdots[linecolor=black, dotsize=0.4](1.8013889,-2.5986109)
 			\psdots[linecolor=black, dotsize=0.4](1.8013889,-3.3986108)
 			\psdots[linecolor=black, dotsize=0.4](1.8013889,-4.198611)
 			\psdots[linecolor=black, dotsize=0.4](1.8013889,-4.998611)
 			\psdots[linecolor=black, dotsize=0.4](1.8013889,-5.7986107)
 			\psdots[linecolor=black, dotsize=0.4](1.8013889,-6.598611)
 			\psdots[linecolor=black, dotsize=0.4](1.8013889,-7.398611)
 			\psdots[linecolor=black, dotstyle=o, dotsize=0.4, fillcolor=white](3.401389,4.601389)
 			\psdots[linecolor=black, dotstyle=o, dotsize=0.4, fillcolor=white](3.401389,3.8013892)
 			\psdots[linecolor=black, dotstyle=o, dotsize=0.4, fillcolor=white](3.401389,3.0013893)
 			\psdots[linecolor=black, dotstyle=o, dotsize=0.4, fillcolor=white](3.401389,2.201389)
 			\psdots[linecolor=black, dotstyle=o, dotsize=0.4, fillcolor=white](3.401389,1.4013891)
 			\psdots[linecolor=black, dotstyle=o, dotsize=0.4, fillcolor=white](3.401389,0.60138917)
 			\psdots[linecolor=black, dotstyle=o, dotsize=0.4, fillcolor=white](3.401389,-0.19861084)
 			\psdots[linecolor=black, dotstyle=o, dotsize=0.4, fillcolor=white](3.401389,-0.99861085)
 			\psdots[linecolor=black, dotstyle=o, dotsize=0.4, fillcolor=white](3.401389,-1.7986108)
 			\psdots[linecolor=black, dotstyle=o, dotsize=0.4, fillcolor=white](3.401389,-2.5986109)
 			\psdots[linecolor=black, dotstyle=o, dotsize=0.4, fillcolor=white](3.401389,-3.3986108)
 			\psdots[linecolor=black, dotstyle=o, dotsize=0.4, fillcolor=white](3.401389,-4.998611)
 			\psdots[linecolor=black, dotstyle=o, dotsize=0.4, fillcolor=white](3.401389,-5.7986107)
 			\psdots[linecolor=black, dotstyle=o, dotsize=0.4, fillcolor=white](3.401389,-6.598611)
 			\psdots[linecolor=black, dotstyle=o, dotsize=0.4, fillcolor=white](3.401389,-7.398611)
 			\psdots[linecolor=black, dotstyle=o, dotsize=0.4, fillcolor=white](0.20138885,4.601389)
 			\psdots[linecolor=black, dotstyle=o, dotsize=0.4, fillcolor=white](0.20138885,3.8013892)
 			\psdots[linecolor=black, dotstyle=o, dotsize=0.4, fillcolor=white](0.20138885,3.0013893)
 			\psdots[linecolor=black, dotstyle=o, dotsize=0.4, fillcolor=white](0.20138885,2.201389)
 			\psdots[linecolor=black, dotstyle=o, dotsize=0.4, fillcolor=white](0.20138885,1.4013891)
 			\psdots[linecolor=black, dotstyle=o, dotsize=0.4, fillcolor=white](0.20138885,0.60138917)
 			\psdots[linecolor=black, dotstyle=o, dotsize=0.4, fillcolor=white](0.20138885,-0.19861084)
 			\psdots[linecolor=black, dotstyle=o, dotsize=0.4, fillcolor=white](0.20138885,-0.99861085)
 			\psdots[linecolor=black, dotstyle=o, dotsize=0.4, fillcolor=white](0.20138885,-1.7986108)
 			\psdots[linecolor=black, dotstyle=o, dotsize=0.4, fillcolor=white](0.20138885,-2.5986109)
 			\psdots[linecolor=black, dotstyle=o, dotsize=0.4, fillcolor=white](0.20138885,-3.3986108)
 			\psdots[linecolor=black, dotstyle=o, dotsize=0.4, fillcolor=white](0.20138885,-4.198611)
 			\psdots[linecolor=black, dotstyle=o, dotsize=0.4, fillcolor=white](0.20138885,-4.998611)
 			\psdots[linecolor=black, dotstyle=o, dotsize=0.4, fillcolor=white](0.20138885,-5.7986107)
 			\psdots[linecolor=black, dotstyle=o, dotsize=0.4, fillcolor=white](0.20138885,-6.598611)
 			\psdots[linecolor=black, dotstyle=o, dotsize=0.4, fillcolor=white](0.20138885,-7.398611)
 			\psdots[linecolor=black, dotstyle=o, dotsize=0.4, fillcolor=white](4.601389,3.4013891)
 			\psdots[linecolor=black, dotstyle=o, dotsize=0.4, fillcolor=white](4.601389,0.20138916)
 			\psdots[linecolor=black, dotstyle=o, dotsize=0.4, fillcolor=white](4.601389,-2.9986107)
 			\psdots[linecolor=black, dotstyle=o, dotsize=0.4, fillcolor=white](4.601389,-6.198611)
 			\psdots[linecolor=black, dotstyle=o, dotsize=0.4, fillcolor=white](8.601389,1.4013891)
 			\psdots[linecolor=black, dotstyle=o, dotsize=0.4, fillcolor=white](8.601389,-0.99861085)
 			\psdots[linecolor=black, dotstyle=o, dotsize=0.4, fillcolor=white](8.601389,-3.3986108)
 			\psdots[linecolor=black, dotstyle=o, dotsize=0.4, fillcolor=white](9.801389,2.201389)
 			\psdots[linecolor=black, dotstyle=o, dotsize=0.4, fillcolor=white](9.801389,1.4013891)
 			\psdots[linecolor=black, dotstyle=o, dotsize=0.4, fillcolor=white](9.801389,0.60138917)
 			\psdots[linecolor=black, dotstyle=o, dotsize=0.4, fillcolor=white](9.801389,-0.19861084)
 			\psdots[linecolor=black, dotstyle=o, dotsize=0.4, fillcolor=white](9.801389,-0.99861085)
 			\psdots[linecolor=black, dotstyle=o, dotsize=0.4, fillcolor=white](9.801389,-1.7986108)
 			\psdots[linecolor=black, dotstyle=o, dotsize=0.4, fillcolor=white](9.801389,-2.5986109)
 			\psdots[linecolor=black, dotstyle=o, dotsize=0.4, fillcolor=white](9.801389,-3.3986108)
 			\psdots[linecolor=black, dotstyle=o, dotsize=0.4, fillcolor=white](9.801389,-4.198611)
 			\psdots[linecolor=black, dotstyle=o, dotsize=0.4, fillcolor=white](13.001389,2.201389)
 			\psdots[linecolor=black, dotstyle=o, dotsize=0.4, fillcolor=white](13.001389,1.4013891)
 			\psdots[linecolor=black, dotstyle=o, dotsize=0.4, fillcolor=white](13.001389,0.60138917)
 			\psdots[linecolor=black, dotstyle=o, dotsize=0.4, fillcolor=white](13.001389,-0.19861084)
 			\psdots[linecolor=black, dotstyle=o, dotsize=0.4, fillcolor=white](13.001389,-0.99861085)
 			\psdots[linecolor=black, dotstyle=o, dotsize=0.4, fillcolor=white](13.001389,-1.7986108)
 			\psdots[linecolor=black, dotstyle=o, dotsize=0.4, fillcolor=white](13.001389,-2.5986109)
 			\psdots[linecolor=black, dotstyle=o, dotsize=0.4, fillcolor=white](13.001389,-3.3986108)
 			\psdots[linecolor=black, dotstyle=o, dotsize=0.4, fillcolor=white](13.001389,-4.198611)
 			\rput[bl](6.541389,-0.69861084){$e$}
 			\rput[bl](7.181389,-1.5186108){$u$}
 			\rput[bl](5.8613887,-1.4986109){$v$}
 			\rput[bl](8.421389,1.7213892){$u_1$}
 			\rput[bl](8.441389,-0.67861086){$u_2$}
 			\rput[bl](8.441389,-4.018611){$u_3$}
 			\rput[bl](4.501389,3.8613892){$v_1$}
 			\rput[bl](4.481389,0.64138913){$v_2$}
 			\rput[bl](4.441389,-3.6386108){$v_3$}
 			\rput[bl](4.481389,-6.8186107){$v_4$}
 			\psdots[linecolor=black, dotstyle=o, dotsize=0.4, fillcolor=white](3.401389,-4.198611)
 			\end{pspicture}
 		}
 	\end{center}
 	\caption{Graph $G$} \label{g-e-upper}
 \end{figure}

 \begin{remark}
 	{\normalfont
 		Bounds in Theorem \ref{edge-deletion} are tight. For the upper bound, consider $G$ as shown in Figure \ref{g-e-upper}. One can easily check that the set of black vertices is a strong dominating set of $G$ (say $D$). If we remove edge $e$, then for example, for the vertex $v_1$, we have $\deg (v) < \deg (v_1)$, and $v$ does not strong dominate $v_1$ any more. Since all of the neighbours of $v_1$ have less degree, so we should have it in our strong dominating set. So, by the same argument for all vertices,
 		$$D'=\left( D \cup \{v_1,v_2,v_3,v_4,u_1,u_2,u_3 \} \right) \setminus \{v,u\} $$
 		is a strong dominating set for $G-e$, and we are done. For the lower bound, consider $H$ as shown in Figure \ref{g-e-lower}. One can easily check that 
 		$S=\{v_1,v_2,v_3,u_1,u_2,u_3,u_4 \}$
 		is a strong dominating set for $H-e$, and $S'=\{u,v_1,v_2,v_3,u_1,u_2,u_3,u_4 \}$ is a strong dominating set for $H$, as desired.
 	}
 \end{remark}

 \begin{remark}
 	\normalfont
 	It is easy to see that if $P_n$ and $C_n$ are the path graph and the cycle graph of order $n$, respectively, then $\gamma_{st}(P_{n})=\gamma_{st}(C_{n})=\lceil\frac{n}{3}\rceil.$
 	So the path $P_n$ (if $n\not\equiv 1$ $(mod\, 3)$ and $e$ is edge incident with leaves), is  another examples for the tightness of the upper bound in  Theorem \ref{edge-deletion}. 
 	Note that we do not have equalities of  Theorem \ref{edge-deletion} for the cycles. 
 	
 \end{remark}

 We close this subsection with the following theorem which is about the strong domination number of corona of two graphs $G_1\circ G_2$ when it is modified by deletion of an edge.

 \begin{figure}
 	\begin{center}
 		\psscalebox{0.55 0.55}
 		{
 			\begin{pspicture}(0,-7.4314423)(11.594231,-1.7743268)
 			\psline[linecolor=black, linewidth=0.08](4.9971156,-4.3714423)(6.5971155,-4.3714423)(6.5971155,-4.3714423)
 			\psdots[linecolor=black, dotsize=0.4](4.9971156,-4.3714423)
 			\psdots[linecolor=black, dotsize=0.4](6.5971155,-4.3714423)
 			\rput[bl](5.7371154,-4.071442){$e$}
 			\rput[bl](6.3771152,-4.8914423){$u$}
 			\rput[bl](5.0571156,-4.8714423){$v$}
 			\rput[bl](7.9771156,-2.3714423){$u_1$}
 			\rput[bl](9.6371155,-4.9114423){$u_2$}
 			\rput[bl](9.6371155,-6.111442){$u_3$}
 			\rput[bl](3.3171155,-2.4314423){$v_1$}
 			\rput[bl](1.5971155,-4.971442){$v_2$}
 			\rput[bl](1.6371155,-6.551442){$v_3$}
 			\psline[linecolor=black, linewidth=0.08](6.5971155,-4.3714423)(8.197116,-2.7714422)(8.197116,-2.7714422)
 			\psline[linecolor=black, linewidth=0.08](8.197116,-2.7714422)(9.797115,-1.9714422)(9.797115,-1.9714422)
 			\psline[linecolor=black, linewidth=0.08](8.197116,-2.7714422)(9.797115,-2.7714422)(9.797115,-2.7714422)
 			\psline[linecolor=black, linewidth=0.08](8.197116,-2.7714422)(9.797115,-3.5714424)(9.797115,-3.5714424)
 			\psline[linecolor=black, linewidth=0.08](6.5971155,-4.3714423)(8.197116,-4.3714423)(9.797115,-4.3714423)(11.397116,-4.3714423)(11.397116,-4.3714423)
 			\psline[linecolor=black, linewidth=0.08](4.9971156,-4.3714423)(3.3971155,-4.3714423)(3.3971155,-4.3714423)
 			\psline[linecolor=black, linewidth=0.08](4.9971156,-4.3714423)(3.3971155,-2.7714422)(3.3971155,-2.7714422)
 			\psline[linecolor=black, linewidth=0.08](4.9971156,-4.3714423)(3.3971155,-5.971442)(3.3971155,-5.971442)
 			\psdots[linecolor=black, dotsize=0.4](8.197116,-2.7714422)
 			\psdots[linecolor=black, dotsize=0.4](9.797115,-1.9714422)
 			\psdots[linecolor=black, dotsize=0.4](9.797115,-2.7714422)
 			\psdots[linecolor=black, dotsize=0.4](9.797115,-3.5714424)
 			\psdots[linecolor=black, dotsize=0.4](8.197116,-4.3714423)
 			\psdots[linecolor=black, dotsize=0.4](9.797115,-4.3714423)
 			\psline[linecolor=black, linewidth=0.08](6.5971155,-4.3714423)(8.197116,-5.571442)(9.797115,-5.571442)(11.397116,-5.571442)(11.397116,-5.571442)
 			\psline[linecolor=black, linewidth=0.08](6.5971155,-4.3714423)(8.197116,-6.7714424)(9.797115,-6.7714424)(11.397116,-6.7714424)(11.397116,-6.7714424)
 			\psdots[linecolor=black, dotsize=0.4](11.397116,-4.3714423)
 			\psdots[linecolor=black, dotsize=0.4](8.197116,-5.571442)
 			\psdots[linecolor=black, dotsize=0.4](9.797115,-5.571442)
 			\psdots[linecolor=black, dotsize=0.4](11.397116,-5.571442)
 			\psdots[linecolor=black, dotsize=0.4](8.197116,-6.7714424)
 			\psdots[linecolor=black, dotsize=0.4](9.797115,-6.7714424)
 			\psdots[linecolor=black, dotsize=0.4](11.397116,-6.7714424)
 			\psdots[linecolor=black, dotsize=0.4](3.3971155,-2.7714422)
 			\psdots[linecolor=black, dotsize=0.4](3.3971155,-4.3714423)
 			\psdots[linecolor=black, dotsize=0.4](3.3971155,-5.971442)
 			\psdots[linecolor=black, dotsize=0.4](1.7971154,-1.9714422)
 			\psdots[linecolor=black, dotsize=0.4](1.7971154,-3.5714424)
 			\psdots[linecolor=black, dotsize=0.4](1.7971154,-4.3714423)
 			\psdots[linecolor=black, dotsize=0.4](0.19711548,-4.3714423)
 			\psdots[linecolor=black, dotsize=0.4](1.7971154,-5.971442)
 			\psdots[linecolor=black, dotsize=0.4](0.19711548,-5.971442)
 			\psline[linecolor=black, linewidth=0.08](1.7971154,-3.5714424)(3.3971155,-2.7714422)(1.7971154,-1.9714422)(1.7971154,-1.9714422)
 			\psline[linecolor=black, linewidth=0.08](0.19711548,-4.3714423)(1.7971154,-4.3714423)(3.3971155,-4.3714423)(3.3971155,-4.3714423)
 			\psline[linecolor=black, linewidth=0.08](0.19711548,-5.971442)(1.7971154,-5.971442)(3.3971155,-5.971442)(3.3971155,-5.971442)
 			\rput[bl](9.657116,-7.4314423){$u_4$}
 			\end{pspicture}
 		}
 	\end{center}
 	\caption{Graph $H$} \label{g-e-lower}
 \end{figure}

  \begin{theorem}
  	If $G_1$ and $G_2$ are two graphs, then 
  	\begin{equation*}
  	\gamma_{st}((G_1\circ G_2)-e)=\left\{
  	\begin{array}{ll}
  	\gamma_{st}(G_1\circ G_2)  &\quad\mbox{if  $e\in E((G_1)$ or $e \in E(G_2),$}\\
  	\gamma_{st}(G_1\circ G_2)+1   &\quad\mbox{if  $e=v_iv_j, v_i \in V(G_1), v_j\in V(G_2)$}.
  	\end{array}\right.
  	\end{equation*}
  \end{theorem}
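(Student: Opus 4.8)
The plan is to first determine $\gamma_{st}(G_1\circ G_2)$ exactly and then track what each of the three kinds of edge deletion does to it. Write $V(G_1)=\{v_1,\dots,v_n\}$, let $H_i$ be the $i$-th copy of $G_2$ (so $v_i$ is joined to every vertex of $V(H_i)$), and put $V_i=\{v_i\}\cup V(H_i)$, so that $V(G_1\circ G_2)=V_1\sqcup\cdots\sqcup V_n$.

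The first step is the identity $\gamma_{st}(G_1\circ G_2)=n$. For ``$\le$'': every $w\in V(H_i)$ has $\deg(w)=\deg_{G_2}(w)+1\le |V(G_2)|\le \deg_{G_1}(v_i)+|V(G_2)|=\deg(v_i)$, so $v_i$ strongly dominates $V(H_i)$ and hence $V(G_1)$ is a strong dominating set. For ``$\ge$'': every vertex of $H_i$ has all its neighbours inside $V_i$, so a strong dominating set must meet each $V_i$; since the $V_i$ partition the vertex set, it has at least $n$ vertices. Both halves of this argument are robust: deleting an edge inside $G_1$ or inside one copy $H_i$ only lowers some degrees (and when $v_k$ loses a $G_1$-edge one has $\deg_{G_1}(v_k)\ge 1$, so $\deg(v_k)$ stays $\ge|V(G_2)|$), while the ``all neighbours inside $V_i$'' property is untouched. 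This settles the first line of the theorem: for $e\in E(G_1)$ or $e$ inside a copy, $V(G_1)$ is still a strong dominating set of $(G_1\circ G_2)-e$ and each $V_i$ still must be hit, so $\gamma_{st}((G_1\circ G_2)-e)=n=\gamma_{st}(G_1\circ G_2)$.

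For the second line, let $e=v_iw$ with $w\in V(H_i)$. For the upper bound I take $D=V(G_1)\cup\{w\}$: the copies $H_k$ with $k\ne i$ are still strongly dominated by $v_k$; inside $H_i$, $w$ itself is now in $D$, while every other $u\in V(H_i)$ is still adjacent to $v_i$ and (when $\deg_{G_1}(v_i)\ge1$) satisfies $\deg(u)=\deg_{G_2}(u)+1\le\deg_{G_1}(v_i)+|V(G_2)|-1=\deg_{(G_1\circ G_2)-e}(v_i)$; the exceptional case $\deg_{G_1}(v_i)=0$ is dispatched by a short separate check (it gives $\le n+1$, in fact $\le n$ when $G_2$ has a universal vertex). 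Hence $\gamma_{st}((G_1\circ G_2)-e)\le n+1$. For the lower bound, the partition argument again forces $|D\cap V_k|\ge1$ for every $k\ne i$, so everything reduces to proving $|D\cap V_i|\ge 2$ for any strong dominating set $D$ of $(G_1\circ G_2)-e$.

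That last point is the real obstacle. After deleting $e$, the vertex $w$ has all of its neighbours inside $V(H_i)$ and is no longer adjacent to $v_i$, while $v_i$ has lost one neighbour in $V(H_i)$. One wants to rule out a single vertex $x\in V_i$ simultaneously strongly dominating $w$, the rest of $H_i$, and (if $x\ne v_i$) $v_i$: if $x=v_i$ it misses $w$ entirely, and if $x\in V(H_i)$ then $x$ must be $G_2$-adjacent to every other vertex of $H_i$ and also dominate $v_i$, which forces the degree of $v_i$ in $G_1$ and the degree sequence of $G_2$ to be very special. Making this watertight is the delicate part of the proof, and it is natural to expect the equality in the second line to require a mild hypothesis on $G_2$ (for instance, that no single vertex of $G_2$ strongly dominates all of $G_2$); once $|D\cap V_i|\ge 2$ is in hand, summing over the blocks $V_1,\dots,V_n$ yields $\gamma_{st}((G_1\circ G_2)-e)\ge n+1$ and completes the argument.
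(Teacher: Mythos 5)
Your first two steps are correct and are, in fact, more rigorous than what the paper offers: the identity $\gamma_{st}(G_1\circ G_2)=|V(G_1)|=n$ (upper bound via $V(G_1)$, lower bound via the partition into blocks $V_i=\{v_i\}\cup V(H_i)$, each of which must meet any strong dominating set because vertices of $H_i$ have all neighbours in $V_i$) and the first line of the theorem both go through exactly as you say, whereas the paper merely asserts that ``the minimum strong dominating set of $G_1\circ G_2$ is $V(G_1)$.'' The gap you flag in the second line, however, is not a step you failed to see how to close: the claim $|D\cap V_i|\ge 2$ is false in general, and so is the second line of the theorem as stated. Take $G_1=K_2$ on $\{v_1,v_2\}$ and $G_2=K_2$, with copies $\{a_1,b_1\}$ and $\{a_2,b_2\}$, and delete the corona edge $e=v_1a_1$. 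In $(G_1\circ G_2)-e$ the degrees are $\deg(a_1)=1$, $\deg(v_1)=\deg(b_1)=\deg(a_2)=\deg(b_2)=2$, $\deg(v_2)=3$, and $D=\{b_1,v_2\}$ is a strong dominating set ($b_1$ dominates $a_1$ and $v_1$, $v_2$ dominates $v_1,a_2,b_2$). Hence $\gamma_{st}((G_1\circ G_2)-e)=2=\gamma_{st}(G_1\circ G_2)$, not $3$; here $|D\cap V_1|=1$. Even simpler, $G_1=K_2$, $G_2=K_1$ (so $G_1\circ G_2=P_4$) gives the same failure.

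The paper's own proof of this case does not fill the gap either: it argues only that after deleting $e$ the specific set $V(G_1)$ no longer strongly dominates $w$ and concludes that one more vertex is needed, which is not a lower-bound argument, since a minimum strong dominating set of $(G_1\circ G_2)-e$ need not contain $V(G_1)$ (in the example above it is $\{b_1,v_2\}$). Your instinct about the missing hypothesis is right and can be made precise within your block framework: if no single vertex of $G_2$ is a strong dominating set of $G_2$, then a one-element intersection $D\cap V_i=\{x\}$ is impossible, because $x=v_i$ cannot reach $w$, while $x\in V(H_i)$ would have to be adjacent in $G_2$ to every other vertex of its copy with the required degree comparisons, forcing $x$ to strongly dominate $G_2$ by itself; the same hypothesis also removes your $\deg_{G_1}(v_i)=0$ caveat in the upper bound, since it excludes universal vertices of $G_2$. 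So your decomposition is the right frame and your partial results stand, but as written neither your proposal nor the paper proves the second line, and without an added hypothesis on $G_2$ it cannot be proved.
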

  \begin{proof} 
  	In the removing edge $e$ of $G_1\circ G_2$, we have three cases:
  	
  	Case 1.  $e\in E(G_1)$. Since the minimum strong dominating set of $G_1\circ G_2$ is $V(G_1)$, so in this case, $\gamma_{st}((G_1\circ G_2)-e)=\gamma_{st}((G_1\circ G_2).$
  	
  	Case 2. $e\in E(G_2)$. In this case the minimum dominating set of $(G_1\circ G_2)-e$, does not change and so  $\gamma_{st}((G_1\circ G_2)-e)=\gamma_{st}((G_1\circ G_2)).$
  	
  	Case 3. If $e=uv$, $u\in V(G_1), v\in V(G_2)$ or $v\in V(G_1), u\in V(G_2)$. In this case by removing the edge $e$, one vertex of one copy of $V(G_2)$ does not dominate by the minimum strong dominating set of  $G_1\circ G_2$. Therefore $\gamma_{st}((G_1\circ G_2)-e)=\gamma_{st}(G_1\circ G_2)+1$. \qed
  	
  \end{proof}

 \subsection{Edge subdivision}

 In this subsection, we examine the  effects on $\gamma_{st}(G)$ when $G$ is modified by subdivision  on an edge of $G$.

 \begin{theorem}\label{edge-subdivision}
 	If $G=(V,E)$ is a graph and $e\in E$, then,
 	$$\gamma_{st} (G)\leq \gamma_{st}(G_e)\leq \gamma_{st} (G)+1.$$		
 \end{theorem}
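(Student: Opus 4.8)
The plan is to establish the two inequalities separately, in each direction converting a $\gamma_{st}$-set of one graph into a strong dominating set of the other. Throughout write $e=uv$ and let $w$ denote the new vertex of $G_e$, so that $N_{G_e}(w)=\{u,v\}$ and $\deg_{G_e}(w)=2$. The observation that makes everything go through is that subdivision does not disturb the degrees of the old vertices: $\deg_{G_e}(x)=\deg_G(x)$ for every $x\in V(G)$ (deleting $uv$ and adding $uw$ leaves $\deg u$ unchanged, likewise for $v$, and every other vertex is untouched), and the edges of $G_e$ among $V(G)$ are exactly $E(G)\setminus\{uv\}$. So on $V(G)$ the only change is the loss of the single adjacency $uv$.

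For the lower bound $\gamma_{st}(G)\le\gamma_{st}(G_e)$ I would take a $\gamma_{st}$-set $S$ of $G_e$ and split on whether $w\in S$. If $w\notin S$, then $S\subseteq V(G)$, and for any $x\in V(G)\setminus S$ a strong dominator $y\in S$ of $x$ in $G_e$ is automatically $\neq w$, so $xy\in E(G)\setminus\{uv\}\subseteq E(G)$ and $\deg_G(x)\le\deg_G(y)$; hence $S$ is already a strong dominating set of $G$. If $w\in S$, I replace $w$ by whichever of $u,v$ has the larger $G$-degree, say $u$, forming $S'=(S\setminus\{w\})\cup\{u\}$ with $|S'|\le|S|$; the only vertex that can lose all of its dominators is the other endpoint $v$, and this can happen only if $v$ was strong dominated in $G_e$ solely through $w$, which forces $\deg_G(v)=\deg_{G_e}(v)\le 2\le\deg_G(u)$, so the edge $uv\in E(G)$ lets $u\in S'$ strong dominate $v$ in $G$. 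Either way $\gamma_{st}(G)\le|S'|\le\gamma_{st}(G_e)$.

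For the upper bound $\gamma_{st}(G_e)\le\gamma_{st}(G)+1$ I would start from a $\gamma_{st}$-set $D$ of $G$ and first try $D\cup\{w\}$. A vertex $x\in V(G_e)\setminus(D\cup\{w\})$ lies in $V(G)\setminus D$, hence has a strong dominator $y\in D$ in $G$; this persists in $G_e$ unless the only such $y$ is the endpoint of $e$ reached across the deleted edge $uv$, i.e. unless $x\in\{u,v\}$, the other endpoint of $e$ is in $D$, and $x$ has no other strong dominator in $D$ (at most one endpoint can be in this predicament, since a dominator must lie in $D$). If in that situation $\deg_G(x)\le 2$, then $w$ itself strong dominates $x$ in $G_e$ because $\deg_{G_e}(x)\le 2=\deg_{G_e}(w)$, so $D\cup\{w\}$ works. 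The sole remaining case is $\deg_G(x)\ge 3$, and there I would instead use $D\cup\{x\}$: then $x$ strong dominates $w$ (as $\deg_{G_e}(w)=2\le\deg_{G_e}(x)$), and every other vertex is strong dominated exactly as in $G$. In all cases the set produced has size at most $\gamma_{st}(G)+1$.

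The parts that need care are the degree-$2$ bottleneck at $w$ — which is precisely why the argument must sometimes add $u$ or $v$ rather than $w$ — and the bookkeeping of which endpoint, if any, fails; but no serious obstacle arises once one records that subdivision changes no degree among the original vertices and merely removes the edge $uv$ from their induced subgraph.
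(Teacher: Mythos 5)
Your proof is correct, and although its skeleton matches the paper's (lower bound: take a $\gamma_{st}$-set of $G_e$ and, if it contains the subdivision vertex $w$, trade $w$ for the endpoint of $e$ of larger degree; upper bound: augment a $\gamma_{st}$-set of $G$ by a single vertex), your upper bound is genuinely more careful and in fact repairs a flaw in the paper's argument. The paper simply asserts that $D\cup\{v_e\}$ is a strong dominating set of $G_e$ for any $\gamma_{st}$-set $D$ of $G$; this is false in general: for $G=K_4$ with $e=uv$ and $D=\{v\}$, the vertex $u$ has degree $3$ in $G_e$ while its only neighbour in $D\cup\{v_e\}$ is the degree-$2$ vertex $v_e$, so $u$ is not strongly dominated (the bound itself survives, but via a different set). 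Your branching handles exactly this: add $w$ unless there is a (necessarily unique) endpoint $x$ whose only strong dominator in $D$ was the other endpoint across the deleted edge and $\deg_G(x)\ge 3$, in which case you add $x$ itself, which then strongly dominates $w$; either way you get a strong dominating set of size at most $\gamma_{st}(G)+1$. Your lower bound coincides with the paper's replacement argument; one cosmetic remark there: the chain $\deg_G(v)\le 2\le\deg_G(u)$ can fail when $e$ is an isolated edge (then $\deg_G(u)=1$), but the inequality you actually use, $\deg_G(v)\le\deg_G(u)$, is immediate from your choice of $u$ as the endpoint of larger $G$-degree, so nothing is lost.
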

 
 \begin{proof}
 	First we find the upper bound for $\gamma_{st}(G_e)$. Suppose that  $v_e$ is the new vertex in $G_e$ and also $D$ is a strong dominating set of $G$. One can easily check that $D'=D\cup\{v_e\}$ is a strong dominating set of $G_e$, and we are done. Now, we find the lower bound. 	Consider the graph  $G_e$ and let $D_e$ be its strong dominating set. If $v_e\in D_e$, then it may strong dominate its neighbours or not. If it does, then since its degree is $2$, its neighbours should have degree at most two. So for $G$, let strong dominating set be the old one by adding the neighbour of $v_e$ with higher (or equal) degree and removing $v_e$, and hence $\gamma_{st} (G)\leq \gamma_{st}(G_e)$. If it does not, then removing that from our strong dominating set does not have effect on being strong  dominating set for $G$. So  $\gamma_{st} (G)\leq \gamma_{st}(G_e)-1$. So, if $v_e\in D_e$, then $\gamma_{st} (G)\leq \gamma_{st}(G_e)$. if $v_e\notin D_e$, then one can easily check that $D_e$ is a strong dominating set of $G$ too. Therefore we have the result.
 	\qed
 \end{proof}

 \begin{remark}
 	{\normalfont
 		The bounds in Theorem \ref{edge-subdivision} are tight. For the upper bound, consider $G$ as the cycle graph $C_{3k}$ or the path graph $P_{3k}$. For the lower bound, consider $G$ as the cycle graph $C_{3k+1}$ or the path graph $P_{3k+1}$.
 	}
 \end{remark}

 \begin{remark}
 	{\normalfont
 		From Theorems \ref{edge-deletion} and \ref{edge-subdivision}, we see that for some graphs $\gamma_{st} (G-e)= \gamma_{st}(G_e)$. For example, the cycle graphs $C_n$ (when $n\not \equiv 0$ $(mod\, 3))$, and the complete bipartite graph $K_{m,n}$
 satisfy this equality.   
 The characterization of these kind of graphs is an interesting problem which we propose it here:
 		
 		\begin{problem} 
 		Characterize graph $G$ and edge $e$ with $\gamma_{st} (G-e)= \gamma_{st}(G_e)$. 
 		\end{problem} 
} \end{remark}

   The following theorem gives a relation  for  the strong domination number  of the corona product of two graphs when it is modified by subdivision of an edge. 
   
   \begin{theorem}
   	If $G_1$ and $G_2$ are two graphs, then 
   	\begin{equation*}
   	\gamma_{st}((G_1\circ G_2)_e)=\left\{
   	\begin{array}{ll}
   	\gamma_{st}(G_1\circ G_2)  &\quad\mbox{if  $e\in E(G_1)$},\\
   	\gamma_{st}(G_1\circ G_2)+1   &\quad\mbox{if  $e \in E(G_2)$ or $e=v_iv_j , v_i \in V(G_1), v_j\in V(G_2)$}.
   	\end{array}\right.
   	\end{equation*}
   \end{theorem}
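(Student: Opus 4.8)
The plan is to reduce everything to the identity $\gamma_{st}(G_1\circ G_2)=|V(G_1)|$ used in the previous theorem, together with a determination of where the subdivided edge sits inside the corona. First I would record the two structural facts behind that identity: in $G_1\circ G_2$ a vertex $v_i$ of $G_1$ has degree $\deg_{G_1}(v_i)+|V(G_2)|$, which is at least the degree $\deg_{G_2}(x)+1\le|V(G_2)|$ of any vertex $x$ of the $i$-th copy of $G_2$, so $V(G_1)$ is a strong dominating set; and the $|V(G_1)|$ sets $B_i:=\{v_i\}\cup V(\text{$i$-th copy of }G_2)$ are pairwise disjoint while every copy-vertex has all of its neighbours inside its own $B_i$, so every (strong) dominating set meets each $B_i$ and hence has at least $|V(G_1)|$ elements. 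This ``box decomposition'' is the engine for all three cases. Throughout, let $w$ denote the new degree-$2$ vertex produced by subdividing $e$.

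For $e=v_iv_j\in E(G_1)$: the upper bound is immediate because $V(G_1)$ is still a strong dominating set of $(G_1\circ G_2)_e$ — $w$ (the only new vertex) is strong dominated by $v_i$ since $\deg(v_i)=\deg_{G_1}(v_i)+|V(G_2)|\ge 2=\deg(w)$, and every copy-vertex is dominated exactly as before — and the lower bound is again the box argument (absorb $w$ into $B_i$, say). Hence $\gamma_{st}((G_1\circ G_2)_e)=\gamma_{st}(G_1\circ G_2)$.

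For $e=xy\in E(G_2)$ inside the $i$-th copy: the upper bound is realized by $V(G_1)\cup\{w\}$, since $\deg(x)$ and $\deg(y)$ are unchanged by the subdivision so $x,y$ are still strong dominated by $v_i$, every other copy-vertex is dominated by its own $v_k$, and $w\in D$. For the matching lower bound I would assume a strong dominating set $D$ with $|D|=|V(G_1)|$; the box argument (with $w$ now in $B_i$) forces $|D\cap B_i|=1$, and the unique vertex $d_i$ of $D\cap B_i$ lies in $N[w]=\{w,x,y\}$ because $N(w)=\{x,y\}$. Here $d_i=x$ is impossible, since then $y$ — whose every neighbour lies in $B_i\setminus\{x\}$, as $x\not\sim y$ after subdivision — is undominated, and symmetrically $d_i\ne y$; so $d_i=w$, which forces the $i$-th copy to consist only of $x$ and $y$ and forces $v_i$ to be strong dominated from outside by a $G_1$-neighbour of at least its degree, and one rules this out. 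The mixed case $e=v_iv_j$ with $v_j$ in the $i$-th copy is handled analogously: subdivision detaches $v_j$ from $v_i$, so $v_j$ must now be strong dominated by $w$, by a copy-neighbour, or by itself, a short degree comparison forces one vertex beyond $V(G_1)$, and $V(G_1)\cup\{w\}$ (or $V(G_1)\cup\{v_j\}$) attains $|V(G_1)|+1$.

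The step I expect to be the real obstacle is precisely the lower bound in the last two cases: showing that the single vertex of $D\cap B_i$ cannot discharge all of its duties. This forcing argument is fragile in small configurations — for $G_1=G_2=K_2$ and $e\in E(G_2)$ the set $\{w\}\cup(V(G_1)\setminus\{v_i\})$ is already a strong dominating set of size $|V(G_1)|$, and for $G_1=G_2=K_1$ the graph $(G_1\circ G_2)_e$ is $P_3$ with $\gamma_{st}=1=|V(G_1)|$, so the ``$+1$'' can fail. I would therefore expect the clean proof to need a mild non-degeneracy hypothesis (for example on the degrees of the endpoints of $e$ relative to their neighbours in $G_1$), and the bulk of the write-up would consist of pinning down exactly the hypothesis under which every instance of the forcing argument goes through.
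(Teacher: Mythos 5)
Your treatment of the case $e\in E(G_1)$ and your upper-bound constructions ($V(G_1)$, resp.\ $V(G_1)\cup\{w\}$) coincide with what the paper does, both resting on $\gamma_{st}(G_1\circ G_2)=|V(G_1)|$ and your ``box'' decomposition. The obstacle you flag in the lower bound for the other two cases is genuine, and your counterexamples check out. For $G_1=G_2=K_2$ with $e=xy$ an edge of the first copy of $G_2$: after subdivision $\deg(w)=\deg(x)=\deg(y)=2$ while $\deg(v_1)=\deg(v_2)=3$, so $\{w,v_2\}$ is a strong dominating set ($w$ strong dominates $x$ and $y$; $v_2$ strong dominates $v_1$ and its own copy), giving $\gamma_{st}\bigl((G_1\circ G_2)_e\bigr)=2=\gamma_{st}(G_1\circ G_2)$ instead of the claimed $3$. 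Similarly, for the mixed case, $K_2\circ K_1=P_4$ with $e$ a pendant edge subdivides to $P_5$, and $\gamma_{st}(P_5)=2=\gamma_{st}(P_4)$, again contradicting the ``$+1$''. So the second line of the theorem is false as stated, not merely hard to prove.

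The paper's own proof does not close the step you identify: for $e\in E(G_2)$ or a mixed edge it argues only that the old optimum $V(G_1)$ no longer dominates the new (or detached) vertex, which yields $\gamma_{st}\bigl((G_1\circ G_2)_e\bigr)\le \gamma_{st}(G_1\circ G_2)+1$ but says nothing about other sets of size $|V(G_1)|$ --- exactly the sets your examples exploit, where $w$ takes over the strong domination of its two neighbours and $v_i$ is strong dominated from inside $G_1$. Your box-counting route does rescue the statement under a mild hypothesis: if the copy of $G_2$ containing $e$ has at least three vertices, the single $D$-vertex allowed in the enlarged box must lie in $\{w,x,y\}$, and each choice leaves some vertex of that box undominated, forcing $|D|\ge |V(G_1)|+1$; the failures are confined to $G_2=K_2$ (copy-edge case) and $G_2=K_1$ (mixed case) together with a degree condition in $G_1$. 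In short, your analysis is more careful than the paper's: the ``fragile forcing step'' you isolate is precisely where the published proof is invalid, and a non-degeneracy hypothesis of the kind you propose is needed to make the theorem true.
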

   \begin{proof}
   	If  $e\in E(G_1)$, since the minimum strong dominating set of $G_1\circ G_2$ is $V(G_1)$, so by subdividing $e$, the minimum strong dominating set of $(G_1\circ G_2)_e$ is also  $V(G_1)$ and so  $ \gamma_{st}((G_1\circ G_2)_e)= \gamma_{st}(G_1\circ G_2).$ 
   	If  $e \in E(G_2)$ or $e=v_iv_j, v_i \in V(G_1), v_j\in V(G_2)$, by subdividing edge $e$, one vertex of one copy of $G_2$ or vertex that added to $G_1\circ G_2$, does not dominate by  the minimum strong dominating set of $G_1\circ G_2$. Therefore in this case 
   	$ \gamma_{st}((G_1\circ G_2)_e)= \gamma_{st}((G_1\circ G_2)+1.$ \qed
   \end{proof}

\subsection{Edge contraction}

 In this subsection, we examine the  effects on $\gamma_{st}(G)$ when $G$ is modified by contraction  on an edge of $G$.

 \begin{theorem}\label{edge-contraction}
 	If $G=(V,E)$ is a graph  which is not $K_2$, and $e=uv\in E$, then,
 	$$\gamma_{st} (G)-\deg(u)-\deg(v)+3\leq \gamma_{st}(G/e)\leq \gamma_{st} (G)+1.$$		
 \end{theorem}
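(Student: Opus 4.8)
Write $w$ for the vertex of $G/e$ obtained by identifying the endpoints $u$ and $v$. Everything rests on recording how the contraction moves degrees: $N_{G/e}(w)=\big(N_G(u)\cup N_G(v)\big)\setminus\{u,v\}$, hence $\deg_{G/e}(w)\ge\max\{\deg_G(u),\deg_G(v)\}-1$; and for every $x\in V(G)\setminus\{u,v\}$ one has $\deg_{G/e}(x)=\deg_G(x)$, except that $\deg_{G/e}(x)=\deg_G(x)-1$ when $x$ is adjacent to both $u$ and $v$. So no old vertex gains degree and every degree shifts by at most one. The plan is to prove the two inequalities separately, in each case transporting a minimum strong dominating set across the contraction.

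For the upper bound I would take a $\gamma_{st}$-set $D$ of $G$ and set $D'=\big(D\setminus\{u,v\}\big)\cup\{w\}$, so that $|D'|\le\gamma_{st}(G)+1$, with a vertex of slack whenever $u$ or $v$ already lies in $D$. One then verifies $D'$ strongly dominates $G/e$: given $x\in V(G/e)\setminus D'$ there is $y\in D$ with $xy\in E(G)$ and $\deg_G(x)\le\deg_G(y)$, and one distinguishes whether $y\in\{u,v\}$ (so $x$ is a neighbour of $w$ in $G/e$ and one checks that $w$ dominates $x$, using the slack that is available precisely in this case) or $y\notin\{u,v\}$ (so $y\in D'$ and $xy$ survives in $G/e$, and the degree facts above give $\deg_{G/e}(x)\le\deg_{G/e}(y)$). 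The only comparison that can fail is when $y$ is a common neighbour of $u$ and $v$ while $x$ is not and $\deg_G(x)=\deg_G(y)$; there one falls back on $w$, or, exploiting the slack available when $\{u,v\}\cap D\ne\emptyset$, adds one neighbour of $x$ of degree $\deg_G(x)$ to $D'$.

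For the lower bound I would take a $\gamma_{st}$-set $S$ of $G/e$ and build a strong dominating set of $G$. If $w\notin S$ then $S\subseteq V(G)\setminus\{u,v\}$ and $S\cup\{u,v\}$ (or $S$ together with a maximum-degree neighbour of $u$ and one of $v$) already works, giving $\gamma_{st}(G)\le|S|+2$, which suffices once $\deg_G(u)+\deg_G(v)\ge 5$; the finitely many small-degree possibilities, using $G\ne K_2$, are checked by hand. If $w\in S$, deleting $w$ harms only the vertices that $w$ privately dominated together with $u$ and $v$, all of which lie in $N_G(u)\cup N_G(v)$; assuming $\deg_G(u)\le\deg_G(v)$, the vertex $u$ is strongly dominated by $v$ in $G$ and may be dropped, and one checks that
\[S'=\big(S\setminus\{w\}\big)\cup\big((N_G(u)\cup N_G(v))\setminus\{u\}\big),\]
after possibly deleting one further already-dominated neighbour (needed only when $u$ and $v$ have no common neighbour), is a strong dominating set of $G$. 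Since $u\in N_G(v)$ and $v\in N_G(u)$ and true common neighbours of $u$ and $v$ are counted once, $|N_G(u)\cup N_G(v)|\le\deg_G(u)+\deg_G(v)$, so $|S'|\le|S|+\deg_G(u)+\deg_G(v)-3$, and rearranging yields the claimed lower bound.

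The degree bookkeeping is routine; the real obstacle, in both directions, is the configuration in which a vertex serving as a strong dominator is a common neighbour of $u$ and $v$ and hence loses one from its degree under the contraction, so that a domination holding in one graph only just fails in the other. Absorbing this is exactly what the additive constants ($+1$, respectively $\deg_G(u)+\deg_G(v)-3$) pay for, and it is where the case analysis must be carried out with care — in particular, showing that when $u$ and $v$ have no common neighbour one can still discard one extra vertex from the transported set, and that excluding $K_2$ really does rule out the degenerate small-degree cases.
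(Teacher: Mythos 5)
Your instinct about where the danger lies is exactly right, but the case you isolate in the upper bound is not a patchable nuisance --- it is fatal, and none of your three escape routes works. ``Falling back on $w$'' requires $x$ to be adjacent to $u$ or $v$, which need not hold; the ``slack'' exists only when $\{u,v\}\cap D\neq\emptyset$, which also need not hold; and a neighbour of $x$ of degree $\deg_G(x)$ other than the spoiled dominator need not exist, while several such vertices $x$ may need repairing with a budget of at most one extra vertex. In fact the inequality $\gamma_{st}(G/e)\le\gamma_{st}(G)+1$ cannot be proved this way because it is false: let $u\sim v$, let $y$ be a common neighbour adjacent to $u,v,x_1,x_2$, let $x_1,x_2$ each be adjacent to $y,p,q,r$, let $p,q,r$ each be adjacent to $x_1,x_2,H$, and let $H$ carry two further pendant vertices. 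Then $\deg(y)=\deg(x_1)=\deg(x_2)=4$, $\deg(p)=\deg(q)=\deg(r)=3$, $\deg(H)=5$, $\deg(u)=\deg(v)=2$, and $\{y,H\}$ is a strong dominating set, so $\gamma_{st}(G)=2$; but in $G/e$ the degrees of $y$ and $w$ drop to $3$ and $1$, so every strong dominating set must contain $x_1$, must contain $x_2$, must meet $\{H,\ell_1,\ell_2\}$, and must meet $\{w,y\}$ --- four pairwise disjoint requirements --- whence $\gamma_{st}(G/e)\ge 4>\gamma_{st}(G)+1$. So the transported set $D'=(D\setminus\{u,v\})\cup\{w\}$ (which is also the construction used in the paper) cannot be rescued by any local fix.

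The lower bound half breaks precisely at the point you defer. The ``finitely many small-degree possibilities'' are not finitely many graphs, and they contain genuine failures: for $G=P_4$ and $e$ the pendant edge, $\deg(u)+\deg(v)=3$, $\gamma_{st}(P_4)=2$, $G/e=P_3$ and $\gamma_{st}(P_3)=1$, so $\gamma_{st}(G)-\deg(u)-\deg(v)+3=2>\gamma_{st}(G/e)$. Your case $w\notin S$ only ever yields $\gamma_{st}(G)\le\gamma_{st}(G/e)+2$, which cannot be sharpened to the stated bound when $\deg(u)+\deg(v)\le 4$; moreover, in the case $w\in S$ with $u,v$ having no common neighbour, the step ``after possibly deleting one further already-dominated neighbour'' is asserted rather than proved, and there is no guarantee such a deletable vertex exists (the removed vertex must itself acquire a dominator of sufficiently large degree). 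For completeness: the paper's own argument follows the same transport strategy and has the same defects --- its subcases produce $+1$, $+\deg(v)$ and $+\deg(u)+\deg(v)-3$, which do not combine to the claimed inequality for small degrees, and its upper-bound construction is the one refuted above. So the gap is not in your bookkeeping but in the statement itself; both directions require additional hypotheses (for instance lower bounds on $\deg(u),\deg(v)$ and control of the common neighbours of $u$ and $v$) before an argument of this shape can close.
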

 
 \begin{proof}
 	Suppose that  $w$ is the new vertex in $G/e$ by contraction of $e$ and replacement of that with $u$ and $v$. First we find the upper bound for $\gamma_{st}(G/e)$.  Suppose that $D$ is a strong dominating set of $G$.	If at least one of $u$ and $v$ be in $D$, then $D'=\left( D\cup\{w\}\right) \setminus \{u,v\}$ is a strong dominating set for $G/e$, since every vertices in $V(G)\setminus D$ are strong dominated by same vertices as before or possibly $w$. If $u,v\notin D$, then one can easily check that $D'=\left( D\cup\{w\}\right)$ is a strong dominating set for $G/e$, and therefore $\gamma_{st}(G/e)\leq \gamma_{st} (G)+1$. Now, we find the lower bound for $\gamma_{st}(G/e)$. First, we find a strong dominating set $S$ for $G/e$. We have two cases:
 	\begin{itemize}
 		\item[(i)]
 		$w\notin S$. It is clear that after forming $G$, $S'=S\cup\{u\}$ is a strong dominating set of $G$ and we have $\gamma_{st}(G)\leq \gamma_{st} (G/e)+1$. 
 		
 		\item[(ii)]
 		$w\in S$. If every vertices in $V(G)\setminus S$ are strong dominating by vertices except $w$, then clearly $S'=\left(S\cup\{u,v\}\right)\setminus \{w\}$ is  a strong dominating set for $G$ and we have $\gamma_{st}(G)\leq \gamma_{st} (G/e)+1$. Now suppose that there exists $w'\in N(w)\setminus S$ such that $\deg(w')\leq\deg(w)$. 
 		We have the following cases:
 		\begin{itemize}
 			\item[(1)]
 			For all vertices  $x\in N(u)$, we have $\deg(x)\leq\deg(u)$, and for all vertices  $y\in N(v)$, we have $\deg(y)\leq\deg(v)$. In this case, one can easily check that
 			$$S'=\left( S \cup \{u,v\} \right) \setminus \{w\} $$
 			is a strong dominating set for $G$, and we have $\gamma_{st}(G)\leq \gamma_{st} (G/e)+1$.
 			\item[(2)]
 			For all vertices  $x\in N(u)$, we have $\deg(x)\leq\deg(u)$, and there exists  $y'\in N(v)$, such that $\deg(v)\leq\deg(y')$. In this case, let
 			$$S'=\left( S \cup  N(v) \right) \setminus \{w\}.$$
 			Then $v$ is strong dominated by $y'$ and the rest of vertices in $V(G)\setminus S$ are strong dominated as before (and possibly by $u$). So $S'$ is a strong dominating set, and hence $\gamma_{st}(G)\leq \gamma_{st} (G/e)+\deg(v)$.
 			\item[(3)]
 			There exists  $x'\in N(u)$, such that $\deg(u)\leq\deg(x')$, and there exists  $y'\in N(v)$, such that $\deg(v)\leq\deg(y')$.
 			In this case, let
 			$$S'=\left( S \cup \left( N(u)\setminus \{v\} \right) \cup \left( N(v)\setminus \{u\} \right) \right) \setminus \{w\}.$$
 			Then $u$ is strong dominated by $x'$, $v$ is strong dominated by $y'$, and the rest of vertices in $V(G)\setminus S$ are strong dominated as before. Hence $\gamma_{st}(G)\leq \gamma_{st} (G/e)+\deg(u)+\deg(v)-3$.
 		\end{itemize}
 		
 		Hence in any case, $\gamma_{st}(G/e) \geq \gamma_{st} (G)-\deg(u)-\deg(v)+3$.
 	\end{itemize}
 	Therefore we have the result. 
 	\qed
 \end{proof}

 \begin{figure}
 	\begin{center}
 		\psscalebox{0.56 0.56}
 		{
 			\begin{pspicture}(0,-7.9993057)(18.002779,0.40208343)
 			\psline[linecolor=black, linewidth=0.08](3.0013888,-2.9993055)(4.601389,-2.9993055)(4.601389,-2.9993055)
 			\rput[bl](3.7413888,-2.6993055){$e$}
 			\rput[bl](4.3813887,-3.5193055){$u$}
 			\rput[bl](3.061389,-3.4993055){$v$}
 			\psline[linecolor=black, linewidth=0.08](4.601389,-2.9993055)(6.201389,-2.9993055)(6.201389,-2.9993055)
 			\psline[linecolor=black, linewidth=0.08](6.201389,-0.5993054)(7.4013886,0.20069458)(7.4013886,0.20069458)
 			\psline[linecolor=black, linewidth=0.08](6.201389,-0.5993054)(7.4013886,-0.5993054)(7.4013886,-0.5993054)
 			\psline[linecolor=black, linewidth=0.08](6.201389,-0.5993054)(7.4013886,-1.3993055)(7.4013886,-1.3993055)
 			\psline[linecolor=black, linewidth=0.08](4.601389,-2.9993055)(6.201389,-5.3993053)(6.201389,-5.3993053)
 			\psline[linecolor=black, linewidth=0.08](4.601389,-2.9993055)(6.201389,-0.5993054)(6.201389,-0.5993054)
 			\psline[linecolor=black, linewidth=0.08](6.201389,-2.9993055)(7.4013886,-2.1993055)(7.4013886,-2.1993055)
 			\psline[linecolor=black, linewidth=0.08](6.201389,-2.9993055)(7.4013886,-2.9993055)(7.4013886,-2.9993055)
 			\psline[linecolor=black, linewidth=0.08](6.201389,-2.9993055)(7.4013886,-3.7993054)(7.4013886,-3.7993054)
 			\psline[linecolor=black, linewidth=0.08](6.201389,-5.3993053)(7.4013886,-4.5993056)(7.4013886,-4.5993056)
 			\psline[linecolor=black, linewidth=0.08](6.201389,-5.3993053)(7.4013886,-5.3993053)(7.4013886,-5.3993053)
 			\psline[linecolor=black, linewidth=0.08](6.201389,-5.3993053)(7.4013886,-6.1993055)(7.4013886,-6.1993055)
 			\psline[linecolor=black, linewidth=0.08](3.0013888,-2.9993055)(1.4013889,-2.9993055)(1.4013889,-2.9993055)
 			\psline[linecolor=black, linewidth=0.08](3.0013888,-2.9993055)(1.4013889,-0.5993054)
 			\psline[linecolor=black, linewidth=0.08](3.0013888,-2.9993055)(1.4013889,-5.3993053)(1.4013889,-5.3993053)
 			\psline[linecolor=black, linewidth=0.08](1.4013889,-0.5993054)(0.20138885,-0.5993054)(0.20138885,-0.5993054)
 			\psline[linecolor=black, linewidth=0.08](1.4013889,-0.5993054)(0.20138885,-1.3993055)(0.20138885,-1.3993055)
 			\psline[linecolor=black, linewidth=0.08](1.4013889,-0.5993054)(0.20138885,0.20069458)(0.20138885,0.20069458)
 			\psline[linecolor=black, linewidth=0.08](1.4013889,-2.9993055)(0.20138885,-2.1993055)(0.20138885,-2.1993055)
 			\psline[linecolor=black, linewidth=0.08](1.4013889,-2.9993055)(0.20138885,-2.9993055)(0.20138885,-2.9993055)
 			\psline[linecolor=black, linewidth=0.08](1.4013889,-2.9993055)(0.20138885,-3.7993054)(0.20138885,-3.7993054)
 			\psline[linecolor=black, linewidth=0.08](1.4013889,-5.3993053)(0.20138885,-4.5993056)(0.20138885,-4.5993056)
 			\psline[linecolor=black, linewidth=0.08](1.4013889,-5.3993053)(0.20138885,-5.3993053)(0.20138885,-5.3993053)
 			\psline[linecolor=black, linewidth=0.08](1.4013889,-5.3993053)(0.20138885,-6.1993055)(0.20138885,-6.1993055)
 			\psdots[linecolor=black, dotstyle=o, dotsize=0.4, fillcolor=white](7.4013886,0.20069458)
 			\psdots[linecolor=black, dotstyle=o, dotsize=0.4, fillcolor=white](7.4013886,-0.5993054)
 			\psdots[linecolor=black, dotstyle=o, dotsize=0.4, fillcolor=white](7.4013886,-1.3993055)
 			\psdots[linecolor=black, dotstyle=o, dotsize=0.4, fillcolor=white](7.4013886,-2.1993055)
 			\psdots[linecolor=black, dotstyle=o, dotsize=0.4, fillcolor=white](7.4013886,-2.9993055)
 			\psdots[linecolor=black, dotstyle=o, dotsize=0.4, fillcolor=white](7.4013886,-3.7993054)
 			\psdots[linecolor=black, dotstyle=o, dotsize=0.4, fillcolor=white](7.4013886,-4.5993056)
 			\psdots[linecolor=black, dotstyle=o, dotsize=0.4, fillcolor=white](7.4013886,-5.3993053)
 			\psdots[linecolor=black, dotstyle=o, dotsize=0.4, fillcolor=white](7.4013886,-6.1993055)
 			\psdots[linecolor=black, dotstyle=o, dotsize=0.4, fillcolor=white](0.20138885,0.20069458)
 			\psdots[linecolor=black, dotstyle=o, dotsize=0.4, fillcolor=white](0.20138885,-0.5993054)
 			\psdots[linecolor=black, dotstyle=o, dotsize=0.4, fillcolor=white](0.20138885,-1.3993055)
 			\psdots[linecolor=black, dotstyle=o, dotsize=0.4, fillcolor=white](0.20138885,-2.1993055)
 			\psdots[linecolor=black, dotstyle=o, dotsize=0.4, fillcolor=white](0.20138885,-2.9993055)
 			\psdots[linecolor=black, dotstyle=o, dotsize=0.4, fillcolor=white](0.20138885,-3.7993054)
 			\psdots[linecolor=black, dotstyle=o, dotsize=0.4, fillcolor=white](0.20138885,-4.5993056)
 			\psdots[linecolor=black, dotstyle=o, dotsize=0.4, fillcolor=white](0.20138885,-5.3993053)
 			\psdots[linecolor=black, dotstyle=o, dotsize=0.4, fillcolor=white](0.20138885,-6.1993055)
 			\psline[linecolor=black, linewidth=0.08](15.001389,-2.9993055)(16.601389,-2.9993055)(16.601389,-2.9993055)
 			\psline[linecolor=black, linewidth=0.08](16.601389,-0.5993054)(17.80139,0.20069458)(17.80139,0.20069458)
 			\psline[linecolor=black, linewidth=0.08](16.601389,-0.5993054)(17.80139,-0.5993054)(17.80139,-0.5993054)
 			\psline[linecolor=black, linewidth=0.08](16.601389,-0.5993054)(17.80139,-1.3993055)(17.80139,-1.3993055)
 			\psline[linecolor=black, linewidth=0.08](15.001389,-2.9993055)(16.601389,-5.3993053)(16.601389,-5.3993053)
 			\psline[linecolor=black, linewidth=0.08](15.001389,-2.9993055)(16.601389,-0.5993054)(16.601389,-0.5993054)
 			\psline[linecolor=black, linewidth=0.08](16.601389,-2.9993055)(17.80139,-2.1993055)(17.80139,-2.1993055)
 			\psline[linecolor=black, linewidth=0.08](16.601389,-2.9993055)(17.80139,-2.9993055)(17.80139,-2.9993055)
 			\psline[linecolor=black, linewidth=0.08](16.601389,-2.9993055)(17.80139,-3.7993054)(17.80139,-3.7993054)
 			\psline[linecolor=black, linewidth=0.08](16.601389,-5.3993053)(17.80139,-4.5993056)(17.80139,-4.5993056)
 			\psline[linecolor=black, linewidth=0.08](16.601389,-5.3993053)(17.80139,-5.3993053)(17.80139,-5.3993053)
 			\psline[linecolor=black, linewidth=0.08](16.601389,-5.3993053)(17.80139,-6.1993055)(17.80139,-6.1993055)
 			\psline[linecolor=black, linewidth=0.08](15.001389,-2.9993055)(13.401389,-2.9993055)(13.401389,-2.9993055)
 			\psline[linecolor=black, linewidth=0.08](15.001389,-2.9993055)(13.401389,-0.5993054)
 			\psline[linecolor=black, linewidth=0.08](15.001389,-2.9993055)(13.401389,-5.3993053)(13.401389,-5.3993053)
 			\psline[linecolor=black, linewidth=0.08](13.401389,-0.5993054)(12.201389,-0.5993054)(12.201389,-0.5993054)
 			\psline[linecolor=black, linewidth=0.08](13.401389,-0.5993054)(12.201389,-1.3993055)(12.201389,-1.3993055)
 			\psline[linecolor=black, linewidth=0.08](13.401389,-0.5993054)(12.201389,0.20069458)(12.201389,0.20069458)
 			\psline[linecolor=black, linewidth=0.08](13.401389,-2.9993055)(12.201389,-2.1993055)(12.201389,-2.1993055)
 			\psline[linecolor=black, linewidth=0.08](13.401389,-2.9993055)(12.201389,-2.9993055)(12.201389,-2.9993055)
 			\psline[linecolor=black, linewidth=0.08](13.401389,-2.9993055)(12.201389,-3.7993054)(12.201389,-3.7993054)
 			\psline[linecolor=black, linewidth=0.08](13.401389,-5.3993053)(12.201389,-4.5993056)(12.201389,-4.5993056)
 			\psline[linecolor=black, linewidth=0.08](13.401389,-5.3993053)(12.201389,-5.3993053)(12.201389,-5.3993053)
 			\psline[linecolor=black, linewidth=0.08](13.401389,-5.3993053)(12.201389,-6.1993055)(12.201389,-6.1993055)
 			\psdots[linecolor=black, dotstyle=o, dotsize=0.4, fillcolor=white](17.80139,0.20069458)
 			\psdots[linecolor=black, dotstyle=o, dotsize=0.4, fillcolor=white](17.80139,-0.5993054)
 			\psdots[linecolor=black, dotstyle=o, dotsize=0.4, fillcolor=white](17.80139,-1.3993055)
 			\psdots[linecolor=black, dotstyle=o, dotsize=0.4, fillcolor=white](17.80139,-2.1993055)
 			\psdots[linecolor=black, dotstyle=o, dotsize=0.4, fillcolor=white](17.80139,-2.9993055)
 			\psdots[linecolor=black, dotstyle=o, dotsize=0.4, fillcolor=white](17.80139,-3.7993054)
 			\psdots[linecolor=black, dotstyle=o, dotsize=0.4, fillcolor=white](17.80139,-4.5993056)
 			\psdots[linecolor=black, dotstyle=o, dotsize=0.4, fillcolor=white](17.80139,-5.3993053)
 			\psdots[linecolor=black, dotstyle=o, dotsize=0.4, fillcolor=white](17.80139,-6.1993055)
 			\psdots[linecolor=black, dotstyle=o, dotsize=0.4, fillcolor=white](12.201389,0.20069458)
 			\psdots[linecolor=black, dotstyle=o, dotsize=0.4, fillcolor=white](12.201389,-0.5993054)
 			\psdots[linecolor=black, dotstyle=o, dotsize=0.4, fillcolor=white](12.201389,-1.3993055)
 			\psdots[linecolor=black, dotstyle=o, dotsize=0.4, fillcolor=white](12.201389,-2.1993055)
 			\psdots[linecolor=black, dotstyle=o, dotsize=0.4, fillcolor=white](12.201389,-2.9993055)
 			\psdots[linecolor=black, dotstyle=o, dotsize=0.4, fillcolor=white](12.201389,-3.7993054)
 			\psdots[linecolor=black, dotstyle=o, dotsize=0.4, fillcolor=white](12.201389,-4.5993056)
 			\psdots[linecolor=black, dotstyle=o, dotsize=0.4, fillcolor=white](12.201389,-5.3993053)
 			\psdots[linecolor=black, dotstyle=o, dotsize=0.4, fillcolor=white](12.201389,-6.1993055)
 			\rput[bl](14.881389,-3.6793053){$w$}
 			\psdots[linecolor=black, dotstyle=o, dotsize=0.4, fillcolor=white](3.0013888,-2.9993055)
 			\psdots[linecolor=black, dotstyle=o, dotsize=0.4, fillcolor=white](4.601389,-2.9993055)
 			\psdots[linecolor=black, dotsize=0.4](13.401389,-0.5993054)
 			\psdots[linecolor=black, dotsize=0.4](13.401389,-2.9993055)
 			\psdots[linecolor=black, dotsize=0.4](13.401389,-5.3993053)
 			\psdots[linecolor=black, dotsize=0.4](16.601389,-0.5993054)
 			\psdots[linecolor=black, dotsize=0.4](16.601389,-2.9993055)
 			\psdots[linecolor=black, dotsize=0.4](16.601389,-5.3993053)
 			\psdots[linecolor=black, dotsize=0.4](15.001389,-2.9993055)
 			\psdots[linecolor=black, dotsize=0.4](1.4013889,-0.5993054)
 			\psdots[linecolor=black, dotsize=0.4](1.4013889,-2.9993055)
 			\psdots[linecolor=black, dotsize=0.4](1.4013889,-5.3993053)
 			\psdots[linecolor=black, dotsize=0.4](6.201389,-0.5993054)
 			\psdots[linecolor=black, dotsize=0.4](6.201389,-2.9993055)
 			\psdots[linecolor=black, dotsize=0.4](6.201389,-5.3993053)
 			\rput[bl](3.401389,-7.7993054){\LARGE{$G$}}
 			\rput[bl](14.601389,-7.9993052){\LARGE{$G/e$}}
 			\end{pspicture}
 		}
 	\end{center}
 	\caption{Graphs $G$ and $G/e$, respectively } \label{g/e-upper}
 \end{figure}
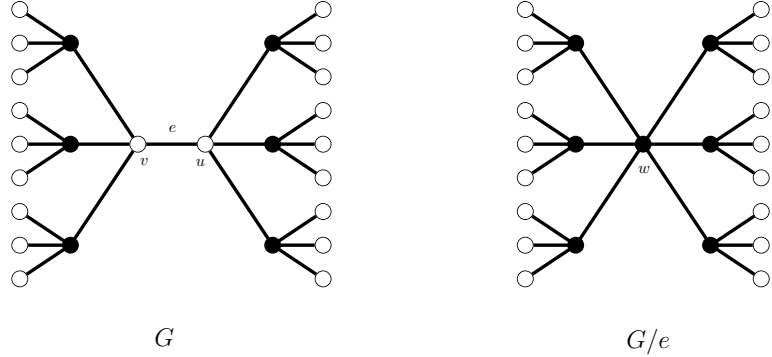

 \begin{remark}
 	{\normalfont
 		Bounds in Theorem \ref{edge-contraction} are tight. For the upper bound, consider
 		Figure \ref{g/e-upper}. One can easily check that the set of black vertices of $G$ and $G/e$  are strong dominating sets and we are done. For the lower bound, consider
 		Figure \ref{g/e-lower}. One can easily check that the set of black vertices of $H$ and $H/e$  are strong dominating sets, as desired. 
 	}
 \end{remark}

 \begin{figure}
 	\begin{center}
 		\psscalebox{0.56 0.56}
 		{
 			\begin{pspicture}(0,-7.9993057)(19.602777,-1.9979166)
 			\psline[linecolor=black, linewidth=0.08](3.8013887,-4.1993055)(5.4013886,-4.1993055)(5.4013886,-4.1993055)
 			\rput[bl](4.541389,-3.8993053){$e$}
 			\rput[bl](5.181389,-4.7193055){$u$}
 			\rput[bl](3.861389,-4.6993055){$v$}
 			\psline[linecolor=black, linewidth=0.08](5.4013886,-4.1993055)(6.601389,-2.9993055)(6.601389,-2.9993055)
 			\psline[linecolor=black, linewidth=0.08](5.4013886,-4.1993055)(6.601389,-5.3993053)(6.601389,-5.3993053)
 			\psline[linecolor=black, linewidth=0.08](3.8013887,-4.1993055)(2.601389,-2.9993055)(2.601389,-2.9993055)
 			\psline[linecolor=black, linewidth=0.08](3.8013887,-4.1993055)(2.601389,-5.3993053)(2.601389,-5.3993053)
 			\psline[linecolor=black, linewidth=0.08](6.601389,-2.9993055)(7.8013887,-2.1993055)(7.8013887,-2.1993055)
 			\psline[linecolor=black, linewidth=0.08](6.601389,-2.9993055)(7.8013887,-2.9993055)(7.8013887,-2.9993055)
 			\psline[linecolor=black, linewidth=0.08](6.601389,-2.9993055)(7.8013887,-3.7993054)(7.8013887,-3.7993054)
 			\psline[linecolor=black, linewidth=0.08](6.601389,-5.3993053)(7.8013887,-4.5993056)(7.8013887,-4.5993056)
 			\psline[linecolor=black, linewidth=0.08](6.601389,-5.3993053)(7.8013887,-5.3993053)(7.8013887,-5.3993053)
 			\psline[linecolor=black, linewidth=0.08](6.601389,-5.3993053)(7.8013887,-6.1993055)(7.8013887,-6.1993055)
 			\psline[linecolor=black, linewidth=0.08](2.601389,-2.9993055)(1.4013889,-2.1993055)(1.4013889,-2.1993055)
 			\psline[linecolor=black, linewidth=0.08](1.4013889,-2.9993055)(2.601389,-2.9993055)(2.601389,-2.9993055)
 			\psline[linecolor=black, linewidth=0.08](2.601389,-2.9993055)(1.4013889,-3.7993054)(1.4013889,-3.7993054)
 			\psline[linecolor=black, linewidth=0.08](2.601389,-5.3993053)(1.4013889,-4.5993056)(1.4013889,-4.5993056)
 			\psline[linecolor=black, linewidth=0.08](2.601389,-5.3993053)(1.4013889,-5.3993053)(1.4013889,-5.3993053)
 			\psline[linecolor=black, linewidth=0.08](2.601389,-5.3993053)(1.4013889,-6.1993055)(1.4013889,-6.1993055)
 			\psline[linecolor=black, linewidth=0.08](15.801389,-4.1993055)(17.001389,-2.9993055)(17.001389,-2.9993055)
 			\psline[linecolor=black, linewidth=0.08](15.801389,-4.1993055)(17.001389,-5.3993053)(17.001389,-5.3993053)
 			\psline[linecolor=black, linewidth=0.08](15.801389,-4.1993055)(14.601389,-2.9993055)(14.601389,-2.9993055)
 			\psline[linecolor=black, linewidth=0.08](15.801389,-4.1993055)(14.601389,-5.3993053)(14.601389,-5.3993053)
 			\psline[linecolor=black, linewidth=0.08](17.001389,-2.9993055)(18.20139,-2.1993055)(18.20139,-2.1993055)
 			\psline[linecolor=black, linewidth=0.08](17.001389,-2.9993055)(18.20139,-2.9993055)(18.20139,-2.9993055)
 			\psline[linecolor=black, linewidth=0.08](17.001389,-2.9993055)(18.20139,-3.7993054)(18.20139,-3.7993054)
 			\psline[linecolor=black, linewidth=0.08](17.001389,-5.3993053)(18.20139,-4.5993056)(18.20139,-4.5993056)
 			\psline[linecolor=black, linewidth=0.08](17.001389,-5.3993053)(18.20139,-5.3993053)(18.20139,-5.3993053)
 			\psline[linecolor=black, linewidth=0.08](17.001389,-5.3993053)(18.20139,-6.1993055)(18.20139,-6.1993055)
 			\psline[linecolor=black, linewidth=0.08](14.601389,-2.9993055)(13.401389,-2.1993055)(13.401389,-2.1993055)
 			\psline[linecolor=black, linewidth=0.08](13.401389,-2.9993055)(14.601389,-2.9993055)(14.601389,-2.9993055)
 			\psline[linecolor=black, linewidth=0.08](14.601389,-2.9993055)(13.401389,-3.7993054)(13.401389,-3.7993054)
 			\psline[linecolor=black, linewidth=0.08](14.601389,-5.3993053)(13.401389,-4.5993056)(13.401389,-4.5993056)
 			\psline[linecolor=black, linewidth=0.08](14.601389,-5.3993053)(13.401389,-5.3993053)(13.401389,-5.3993053)
 			\psline[linecolor=black, linewidth=0.08](14.601389,-5.3993053)(13.401389,-6.1993055)(13.401389,-6.1993055)
 			\rput[bl](15.641389,-4.7193055){$w$}
 			\rput[bl](4.201389,-7.7993054){\LARGE{$H$}}
 			\rput[bl](15.401389,-7.9993052){\LARGE{$H/e$}}
 			\psline[linecolor=black, linewidth=0.08](7.8013887,-2.1993055)(9.001389,-2.1993055)(9.001389,-2.1993055)
 			\psline[linecolor=black, linewidth=0.08](7.8013887,-2.9993055)(9.001389,-2.9993055)(9.001389,-2.9993055)
 			\psline[linecolor=black, linewidth=0.08](7.8013887,-3.7993054)(9.001389,-3.7993054)(9.001389,-3.7993054)
 			\psline[linecolor=black, linewidth=0.08](7.8013887,-4.5993056)(9.001389,-4.5993056)(9.001389,-4.5993056)
 			\psline[linecolor=black, linewidth=0.08](7.8013887,-5.3993053)(9.001389,-5.3993053)(9.001389,-5.3993053)
 			\psline[linecolor=black, linewidth=0.08](7.8013887,-6.1993055)(9.001389,-6.1993055)(9.001389,-6.1993055)
 			\psline[linecolor=black, linewidth=0.08](18.20139,-2.1993055)(19.401388,-2.1993055)(19.401388,-2.1993055)
 			\psline[linecolor=black, linewidth=0.08](18.20139,-2.9993055)(19.401388,-2.9993055)(19.401388,-2.9993055)
 			\psline[linecolor=black, linewidth=0.08](18.20139,-3.7993054)(19.401388,-3.7993054)(19.401388,-3.7993054)
 			\psline[linecolor=black, linewidth=0.08](18.20139,-4.5993056)(19.401388,-4.5993056)(19.401388,-4.5993056)
 			\psline[linecolor=black, linewidth=0.08](18.20139,-5.3993053)(19.401388,-5.3993053)(19.401388,-5.3993053)
 			\psline[linecolor=black, linewidth=0.08](18.20139,-6.1993055)(19.401388,-6.1993055)(19.401388,-6.1993055)
 			\psline[linecolor=black, linewidth=0.08](12.201389,-2.1993055)(13.401389,-2.1993055)(13.401389,-2.1993055)
 			\psline[linecolor=black, linewidth=0.08](12.201389,-2.9993055)(13.401389,-2.9993055)(13.401389,-2.9993055)
 			\psline[linecolor=black, linewidth=0.08](12.201389,-3.7993054)(13.401389,-3.7993054)(13.401389,-3.7993054)
 			\psline[linecolor=black, linewidth=0.08](12.201389,-4.5993056)(13.401389,-4.5993056)(13.401389,-4.5993056)
 			\psline[linecolor=black, linewidth=0.08](12.201389,-5.3993053)(13.401389,-5.3993053)(13.401389,-5.3993053)
 			\psline[linecolor=black, linewidth=0.08](12.201389,-6.1993055)(13.401389,-6.1993055)(13.401389,-6.1993055)
 			\psline[linecolor=black, linewidth=0.08](0.20138885,-2.1993055)(1.4013889,-2.1993055)(1.4013889,-2.1993055)
 			\psline[linecolor=black, linewidth=0.08](0.20138885,-2.9993055)(1.4013889,-2.9993055)(1.4013889,-2.9993055)
 			\psline[linecolor=black, linewidth=0.08](0.20138885,-3.7993054)(1.4013889,-3.7993054)(1.4013889,-3.7993054)
 			\psline[linecolor=black, linewidth=0.08](0.20138885,-4.5993056)(1.4013889,-4.5993056)(1.4013889,-4.5993056)
 			\psline[linecolor=black, linewidth=0.08](0.20138885,-5.3993053)(1.4013889,-5.3993053)(1.4013889,-5.3993053)
 			\psline[linecolor=black, linewidth=0.08](0.20138885,-6.1993055)(1.4013889,-6.1993055)(1.4013889,-6.1993055)
 			\psdots[linecolor=black, dotsize=0.4](15.801389,-4.1993055)
 			\psdots[linecolor=black, dotsize=0.4](13.401389,-2.1993055)
 			\psdots[linecolor=black, dotsize=0.4](13.401389,-2.9993055)
 			\psdots[linecolor=black, dotsize=0.4](13.401389,-3.7993054)
 			\psdots[linecolor=black, dotsize=0.4](13.401389,-4.5993056)
 			\psdots[linecolor=black, dotsize=0.4](13.401389,-5.3993053)
 			\psdots[linecolor=black, dotsize=0.4](13.401389,-6.1993055)
 			\psdots[linecolor=black, dotsize=0.4](18.20139,-2.1993055)
 			\psdots[linecolor=black, dotsize=0.4](18.20139,-2.9993055)
 			\psdots[linecolor=black, dotsize=0.4](18.20139,-3.7993054)
 			\psdots[linecolor=black, dotsize=0.4](18.20139,-4.5993056)
 			\psdots[linecolor=black, dotsize=0.4](18.20139,-5.3993053)
 			\psdots[linecolor=black, dotsize=0.4](18.20139,-6.1993055)
 			\psdots[linecolor=black, dotsize=0.4](6.601389,-2.9993055)
 			\psdots[linecolor=black, dotsize=0.4](6.601389,-5.3993053)
 			\psdots[linecolor=black, dotsize=0.4](2.601389,-2.9993055)
 			\psdots[linecolor=black, dotsize=0.4](2.601389,-5.3993053)
 			\psdots[linecolor=black, dotsize=0.4](1.4013889,-2.1993055)
 			\psdots[linecolor=black, dotsize=0.4](1.4013889,-2.9993055)
 			\psdots[linecolor=black, dotsize=0.4](1.4013889,-3.7993054)
 			\psdots[linecolor=black, dotsize=0.4](1.4013889,-4.5993056)
 			\psdots[linecolor=black, dotsize=0.4](1.4013889,-5.3993053)
 			\psdots[linecolor=black, dotsize=0.4](1.4013889,-6.1993055)
 			\psdots[linecolor=black, dotsize=0.4](7.8013887,-2.1993055)
 			\psdots[linecolor=black, dotsize=0.4](7.8013887,-2.9993055)
 			\psdots[linecolor=black, dotsize=0.4](7.8013887,-3.7993054)
 			\psdots[linecolor=black, dotsize=0.4](7.8013887,-4.5993056)
 			\psdots[linecolor=black, dotsize=0.4](7.8013887,-5.3993053)
 			\psdots[linecolor=black, dotsize=0.4](7.8013887,-6.1993055)
 			\psdots[linecolor=black, dotstyle=o, dotsize=0.4, fillcolor=white](12.201389,-2.1993055)
 			\psdots[linecolor=black, dotstyle=o, dotsize=0.4, fillcolor=white](12.201389,-2.9993055)
 			\psdots[linecolor=black, dotstyle=o, dotsize=0.4, fillcolor=white](12.201389,-3.7993054)
 			\psdots[linecolor=black, dotstyle=o, dotsize=0.4, fillcolor=white](12.201389,-4.5993056)
 			\psdots[linecolor=black, dotstyle=o, dotsize=0.4, fillcolor=white](12.201389,-5.3993053)
 			\psdots[linecolor=black, dotstyle=o, dotsize=0.4, fillcolor=white](12.201389,-6.1993055)
 			\psdots[linecolor=black, dotstyle=o, dotsize=0.4, fillcolor=white](14.601389,-2.9993055)
 			\psdots[linecolor=black, dotstyle=o, dotsize=0.4, fillcolor=white](14.601389,-5.3993053)
 			\psdots[linecolor=black, dotstyle=o, dotsize=0.4, fillcolor=white](17.001389,-2.9993055)
 			\psdots[linecolor=black, dotstyle=o, dotsize=0.4, fillcolor=white](17.001389,-5.3993053)
 			\psdots[linecolor=black, dotstyle=o, dotsize=0.4, fillcolor=white](19.401388,-2.1993055)
 			\psdots[linecolor=black, dotstyle=o, dotsize=0.4, fillcolor=white](19.401388,-2.9993055)
 			\psdots[linecolor=black, dotstyle=o, dotsize=0.4, fillcolor=white](19.401388,-3.7993054)
 			\psdots[linecolor=black, dotstyle=o, dotsize=0.4, fillcolor=white](19.401388,-4.5993056)
 			\psdots[linecolor=black, dotstyle=o, dotsize=0.4, fillcolor=white](19.401388,-5.3993053)
 			\psdots[linecolor=black, dotstyle=o, dotsize=0.4, fillcolor=white](19.401388,-6.1993055)
 			\psdots[linecolor=black, dotstyle=o, dotsize=0.4, fillcolor=white](9.001389,-2.1993055)
 			\psdots[linecolor=black, dotstyle=o, dotsize=0.4, fillcolor=white](9.001389,-2.9993055)
 			\psdots[linecolor=black, dotstyle=o, dotsize=0.4, fillcolor=white](9.001389,-3.7993054)
 			\psdots[linecolor=black, dotstyle=o, dotsize=0.4, fillcolor=white](9.001389,-4.5993056)
 			\psdots[linecolor=black, dotstyle=o, dotsize=0.4, fillcolor=white](9.001389,-5.3993053)
 			\psdots[linecolor=black, dotstyle=o, dotsize=0.4, fillcolor=white](9.001389,-6.1993055)
 			\psdots[linecolor=black, dotstyle=o, dotsize=0.4, fillcolor=white](5.4013886,-4.1993055)
 			\psdots[linecolor=black, dotstyle=o, dotsize=0.4, fillcolor=white](3.8013887,-4.1993055)
 			\psdots[linecolor=black, dotstyle=o, dotsize=0.4, fillcolor=white](0.20138885,-2.1993055)
 			\psdots[linecolor=black, dotstyle=o, dotsize=0.4, fillcolor=white](0.20138885,-2.9993055)
 			\psdots[linecolor=black, dotstyle=o, dotsize=0.4, fillcolor=white](0.20138885,-3.7993054)
 			\psdots[linecolor=black, dotstyle=o, dotsize=0.4, fillcolor=white](0.20138885,-4.5993056)
 			\psdots[linecolor=black, dotstyle=o, dotsize=0.4, fillcolor=white](0.20138885,-5.3993053)
 			\psdots[linecolor=black, dotstyle=o, dotsize=0.4, fillcolor=white](0.20138885,-6.1993055)
 			\end{pspicture}
 		}
 	\end{center}
 	\caption{Graphs $H$ and $H/e$, respectively } \label{g/e-lower}
 \end{figure}
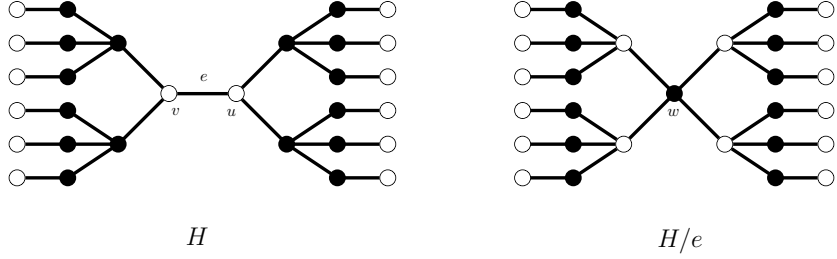
 
 \begin{remark}
 {\normalfont
 The left equality in the Theorem \ref{edge-contraction}  is true for the cycles $C_{3k+1}$.  	
 }
  \end{remark}

 As an immediate result of Theorems  \ref{edge-deletion}, \ref{edge-subdivision}, and \ref{edge-contraction}, we have:

 \begin{corollary}
 	Let $\alpha=\gamma_{st}(G-e)+\gamma_{st}(G_e)+\gamma_{st}(G/e)$, and $\beta=\deg(u)+\deg(v)$. Then,
 	$$\frac{\alpha-\beta}{3}\leq\gamma_{st}(G)\leq \frac{\alpha+\beta+2}{3}.$$
 \end{corollary}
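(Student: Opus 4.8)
The proof is a routine combination of the three two-sided estimates already proved, so I expect no real obstacle beyond keeping track of signs. The plan is first to fix notation: assume $G\neq K_2$ so that $G-e$ and $G/e$ are defined (the subdivision $G_e$ always makes sense), and write $n_0=\gamma_{st}(G)$, $n_1=\gamma_{st}(G-e)$, $n_2=\gamma_{st}(G_e)$, $n_3=\gamma_{st}(G/e)$, so that $\alpha=n_1+n_2+n_3$ and $\beta=\deg(u)+\deg(v)$. The corollary is then just the pair of inequalities $\frac{\alpha-\beta}{3}\le n_0\le\frac{\alpha+\beta+2}{3}$.

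For the upper bound on $n_0$ I would add the three \emph{lower} estimates supplied by Theorems~\ref{edge-deletion}, \ref{edge-subdivision} and \ref{edge-contraction}, namely $n_1\ge n_0-1$, $n_2\ge n_0$ and $n_3\ge n_0-\beta+3$. Summing these gives $\alpha\ge 3n_0-\beta+2$, hence $n_0\le\frac{\alpha+\beta-2}{3}\le\frac{\alpha+\beta+2}{3}$, as claimed (in fact the sharper bound $\frac{\alpha+\beta-2}{3}$ comes out for free). For the lower bound on $n_0$ I would add the three \emph{upper} estimates, namely $n_1\le n_0+\beta-2$, $n_2\le n_0+1$ and $n_3\le n_0+1$; summing gives $\alpha\le 3n_0+\beta$, hence $n_0\ge\frac{\alpha-\beta}{3}$.

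The only point worth a moment's thought is that it is the lower bounds for the three modified graphs that combine to bound $\gamma_{st}(G)$ from above, and the upper bounds that combine to bound it from below; once this is observed, the conclusion follows by dividing the two summed inequalities by $3$. No step here is a genuine obstacle — it is elementary arithmetic on top of the three preceding theorems — so the proof is immediate.
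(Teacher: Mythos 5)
Your proof is correct and is exactly what the paper intends: the corollary is stated as an immediate consequence of Theorems \ref{edge-deletion}, \ref{edge-subdivision} and \ref{edge-contraction}, obtained by summing the three lower bounds (for the upper estimate on $\gamma_{st}(G)$) and the three upper bounds (for the lower estimate), just as you do. Note that your computation in fact gives the slightly sharper upper bound $\gamma_{st}(G)\leq\frac{\alpha+\beta-2}{3}$, which of course implies the stated $\frac{\alpha+\beta+2}{3}$.
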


 \section{Strong domination number of $k$-subdivision of a graph}

The { $k$-subdivision} of $G$, denoted by $G^{\frac{1}{k}}$, is constructed by replacing each edge $v_iv_j$ of $G$ with a path of length $k$, say $P^{\{v_i,v_j\}}$. These $k$-paths are called \textit{superedges}, any new vertex is an internal vertex, and is denoted by $x^{\{v_i,v_j\}}_l$ if it belongs to the superedge $P_{\{v_i,v_j\}}$, $i<j$ with  distance $l$ from the vertex $v_i$, where $l \in \{1, 2, \ldots , k-1\}$ (see for example Figure \ref{2-sub}).  Note that for $k = 1$, we have $G^{1/1}= G^1 = G$, and if  $G$ has $n$ vertices and $m$ edges, then the graph $G^{\frac{1}{k}}$ has $n+(k-1)m$ vertices and $km$ edges. Some results about subdivision of a graph can be found in  \cite{KSUB,Babu,Nima2}. In this section, we study the strong domination number of $k$-subdivision of a graph.  First, we consider the graphs with minimum degree at least $3$.

 \begin{figure}
 	\begin{center}
 		\psscalebox{0.55 0.55}
{
\begin{pspicture}(0,-8.225)(17.48,-0.075)
\psline[linecolor=black, linewidth=0.08](2.16,-0.725)(6.16,-0.725)(7.36,-3.525)(6.16,-6.325)(2.16,-6.325)(0.96,-3.525)(2.16,-0.725)(2.16,-0.725)
\psline[linecolor=black, linewidth=0.08](2.16,-0.725)(6.16,-6.325)(6.16,-6.325)
\psline[linecolor=black, linewidth=0.08](7.36,-3.525)(0.96,-3.525)(0.96,-3.525)
\psline[linecolor=black, linewidth=0.08](6.16,-0.725)(2.16,-6.325)(2.56,-6.325)
\psdots[linecolor=black, dotsize=0.4](2.16,-0.725)
\psdots[linecolor=black, dotsize=0.4](6.16,-0.725)
\psdots[linecolor=black, dotsize=0.4](7.36,-3.525)
\psdots[linecolor=black, dotsize=0.4](6.16,-6.325)
\psdots[linecolor=black, dotsize=0.4](4.16,-3.525)
\psdots[linecolor=black, dotsize=0.4](0.96,-3.525)
\psdots[linecolor=black, dotsize=0.4](2.16,-6.325)
\rput[bl](1.58,-0.325){$u_1$}
\rput[bl](6.28,-0.345){$u_2$}
\rput[bl](7.88,-3.665){$u_3$}
\rput[bl](6.5,-6.945){$u_4$}
\rput[bl](1.72,-6.925){$u_5$}
\rput[bl](0.0,-3.665){$u_6$}
\rput[bl](4.0,-3.005){$u_7$}
\psline[linecolor=black, linewidth=0.08](11.36,-0.725)(15.36,-0.725)(16.56,-3.525)(15.36,-6.325)(11.36,-6.325)(10.16,-3.525)(11.36,-0.725)(11.36,-0.725)
\psline[linecolor=black, linewidth=0.08](11.36,-0.725)(15.36,-6.325)(15.36,-6.325)
\psline[linecolor=black, linewidth=0.08](16.56,-3.525)(10.16,-3.525)(10.16,-3.525)
\psline[linecolor=black, linewidth=0.08](15.36,-0.725)(11.36,-6.325)(11.76,-6.325)
\psdots[linecolor=black, dotsize=0.4](11.36,-0.725)
\psdots[linecolor=black, dotsize=0.4](15.36,-0.725)
\psdots[linecolor=black, dotsize=0.4](16.56,-3.525)
\psdots[linecolor=black, dotsize=0.4](15.36,-6.325)
\psdots[linecolor=black, dotsize=0.4](13.36,-3.525)
\psdots[linecolor=black, dotsize=0.4](10.16,-3.525)
\psdots[linecolor=black, dotsize=0.4](11.36,-6.325)
\rput[bl](10.78,-0.325){$u_1$}
\rput[bl](15.48,-0.345){$u_2$}
\rput[bl](17.08,-3.665){$u_3$}
\rput[bl](15.7,-6.945){$u_4$}
\rput[bl](10.92,-6.925){$u_5$}
\rput[bl](9.2,-3.665){$u_6$}
\psdots[linecolor=black, fillstyle=solid,fillcolor=black, dotsize=0.4](13.36,-0.725)
\psdots[linecolor=black, fillstyle=solid,fillcolor=black, dotsize=0.4](13.36,-6.325)
\psdots[linecolor=black, fillstyle=solid,fillcolor=black, dotsize=0.4](11.76,-3.525)
\psdots[linecolor=black, fillstyle=solid,fillcolor=black, dotsize=0.4](14.96,-3.525)
\psdots[linecolor=black, fillstyle=solid,fillcolor=black, dotsize=0.4](14.38,-2.085)
\psdots[linecolor=black, fillstyle=solid,fillcolor=black, dotsize=0.4](12.34,-2.085)
\psdots[linecolor=black, fillstyle=solid,fillcolor=black, dotsize=0.4](15.94,-2.145)
\psdots[linecolor=black, fillstyle=solid,fillcolor=black, dotsize=0.4](15.98,-4.945)
\psdots[linecolor=black, fillstyle=solid,fillcolor=black, dotsize=0.4](14.32,-4.925)
\psdots[linecolor=black, fillstyle=solid,fillcolor=black, dotsize=0.4](12.34,-4.945)
\psdots[linecolor=black, fillstyle=solid,fillcolor=black, dotsize=0.4](10.76,-4.905)
\psdots[linecolor=black, fillstyle=solid,fillcolor=black, dotsize=0.4](10.78,-2.105)
\rput[bl](13.32,-8.225){$G^{\frac{1}{2}}$}
\rput[bl](3.98,-8.165){$G$}
\rput[bl](13.2,-3.005){$u_7$}
\end{pspicture}
}
 	\end{center}
 	\caption{Graphs $G$ and $G^{\frac{1}{2}}$, respectively} \label{2-sub}
 \end{figure}
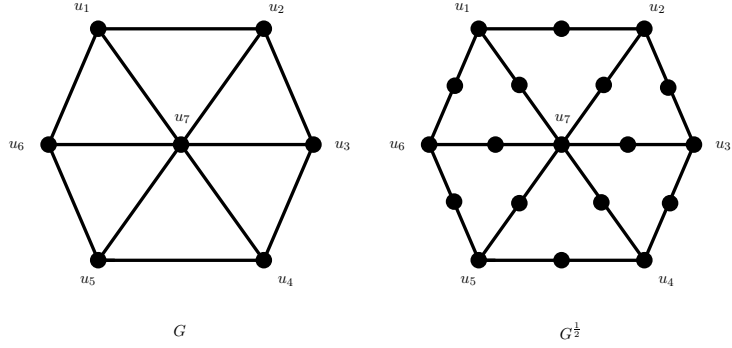

   \begin{theorem}\label{k-sub}
   	Let $G$ be a graph of order $n$, size $m$, and $\delta (G)\geq 3$. Then, 
   	\begin{equation*}
   	\gamma_{st}(G^{\frac{1}{k}})=\left\{
   	\begin{array}{ll}
   	n  &\quad\mbox{if  $k=2,3$},\\
   	n+m \left\lceil \frac{k-3}{3} \right\rceil   &\quad\mbox{otherwise}.
   	\end{array}\right.
   	\end{equation*}
   \end{theorem}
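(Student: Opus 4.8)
The plan is to show that, for $k\ge 2$, every vertex of $G$ must lie in each strong dominating set of $G^{1/k}$, and then to reduce the remaining problem to $m$ independent path--domination problems, one per superedge. (Since $G^{1/1}=G$, I assume throughout that $k\ge 2$; the displayed formula is intended for $k\ge 2$, as $\gamma_{st}(G)$ is in general far below $n$ when $\delta(G)\ge 3$.)

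First I would record the degree structure of $G^{1/k}$: each original vertex $v\in V(G)$ has degree $\deg_G(v)\ge\delta(G)\ge 3$, exactly as in $G$, while every internal vertex has degree exactly $2$. For $k\ge 2$ the neighbours of $v$ in $G^{1/k}$ are precisely the first internal vertices of the superedges at $v$, each of degree $2<3\le\deg_G(v)$; hence $v$ cannot be strong dominated, so $V(G)\subseteq D$ for every strong dominating set $D$, and in particular $\gamma_{st}(G^{1/k})\ge n$. For $k=2,3$ this already gives the answer, since $V(G)$ itself is a strong dominating set: each internal vertex has degree $2$ and is adjacent to an original vertex of degree $\ge 3$. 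Thus $\gamma_{st}(G^{1/k})=n$ for $k=2,3$.

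Now take $k\ge 4$. Let $D$ be a strong dominating set with $V(G)\subseteq D$. Fixing a superedge $P^{\{v_i,v_j\}}$, write its consecutive vertices as $x_0=v_i,x_1,\dots,x_{k-1},x_k=v_j$ (so $x_\ell=x^{\{v_i,v_j\}}_\ell$ for $1\le\ell\le k-1$) and set $A_{ij}=D\cap\{x_1,\dots,x_{k-1}\}$. Since every vertex of $G^{1/k}$ has degree $\ge 2$, an internal vertex not in $D$ is strong dominated as soon as one of its neighbours lies in $D$; so for internal vertices strong domination coincides with ordinary domination, and as distinct superedges meet only in original vertices (which are all in $D$) the superedges decouple completely. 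On $P^{\{v_i,v_j\}}$, the vertex $x_1$ is dominated by $x_0$ and $x_{k-1}$ by $x_k$, but every $x_\ell$ with $2\le\ell\le k-2$ has both its neighbours among $x_1,\dots,x_{k-1}$ and therefore must lie in $A_{ij}$ or be adjacent to $A_{ij}$. Hence $A_{ij}$ dominates the set $S=\{x_2,\dots,x_{k-2}\}$ of size $k-3$, and since $|N[a]\cap S|\le 3$ for every vertex $a$, we get $k-3\le 3\,|A_{ij}|$, i.e.\ $|A_{ij}|\ge\lceil (k-3)/3\rceil$. Summing over the $m$ superedges yields $|D|\ge n+m\lceil (k-3)/3\rceil$.

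For the matching upper bound I would take $D_0=V(G)$ together with, on each superedge, a minimum dominating set of the induced path on $x_2,\dots,x_{k-2}$; since $\gamma(P_{k-3})=\lceil (k-3)/3\rceil$ this adds exactly $m\lceil (k-3)/3\rceil$ internal vertices. Together with $x_0$ (which covers $x_1$) and $x_k$ (which covers $x_{k-1}$) the selected internal vertices dominate all internal vertices of $G^{1/k}$; and because every internal vertex has degree $2$ while its dominators have degree $\ge 2$, and the original vertices have degree $\ge 3$, the set $D_0$ is indeed a strong dominating set. This gives $\gamma_{st}(G^{1/k})=n+m\lceil (k-3)/3\rceil$ for $k\ge 4$. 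I expect the only points needing genuine care to be the per-superedge estimate $|A_{ij}|\ge\lceil (k-3)/3\rceil$ (a closed-neighbourhood counting argument) and the observation that the superedges truly decouple, which rests on the fact that no internal vertex of one superedge is adjacent to any vertex of another.
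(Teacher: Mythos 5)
Your proposal is correct and takes essentially the same route as the paper: every original vertex is forced into any strong dominating set of $G^{\frac{1}{k}}$ because its neighbours are internal vertices of degree $2<\delta(G)$, and the rest of the problem decouples into the $m$ superedges, each reducing to dominating a path on the $k-3$ middle internal vertices, giving $n+m\left\lceil\frac{k-3}{3}\right\rceil$. Your closed-neighbourhood counting for the per-superedge lower bound merely makes explicit a step the paper asserts without detail, so the two arguments coincide in substance.
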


   \begin{proof}
Suppose that $v_iv_j\in E(G)$. First, let $k=2$. Then, $P^{\{v_i,v_j\}}$ consists of vertices $v_i$, $x^{\{v_i,v_j\}}_1$, and $v_j$. Since $\deg (x^{\{v_i,v_j\}}_1)=2$ and $\delta (G)\geq 3$, then we should have $v_i$ and $v_j$ in our strong dominating set. Hence, $\gamma_{st}(G^{\frac{1}{2}})=n$. By the same argument, we have $\gamma_{st}(G^{\frac{1}{3}})=n$, too. Now consider the graph $G^{\frac{1}{k}}$, where $k\geq 4$. Then, $P^{\{v_i,v_j\}}$ consists of vertices 
$v_i, x^{\{v_i,v_j\}}_1, x^{\{v_i,v_j\}}_2,\ldots, x^{\{v_i,v_j\}}_{k-1}, v_j$.
By the same argument as cases $k=2,3$, we need $v_i$ and $v_j$ in our strong dominating set, and they  strong dominate vertices $x^{\{v_i,v_j\}}_1$ and $x^{\{v_i,v_j\}}_{k-1}$, respectively. Now, for the rest of vertices, we have a path  of order $k-3$, and since we need $\left\lceil \frac{k-3}{3} \right\rceil$ vertices among them to have a strong dominating set for this path, then the proof is complete.
   	\qed
   \end{proof}

 By the same argument as proof of Theorem \ref{k-sub}, we have the upper bound in case $\delta (G)\geq 2$. 
 
    \begin{theorem}\label{k-sub-2}
   	Let $G$ be a graph of order $n$, size $m$, and $\delta (G)\geq 2$.  Then, 
   	\begin{equation*}
   	\gamma_{st}(G^{\frac{1}{k}})\leq\left\{
   	\begin{array}{ll}
   	n  &\quad\mbox{if  $k=2,3$},\\
   	n+m \left\lceil \frac{k-3}{3} \right\rceil   &\quad\mbox{otherwise}.
   	\end{array}\right.
   	\end{equation*}
   \end{theorem}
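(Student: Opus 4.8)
The plan is to prove the bound by exhibiting, for every $k$, an explicit strong dominating set of $G^{\frac{1}{k}}$ of the claimed cardinality; since only an upper bound is asserted, the weaker hypothesis $\delta(G)\ge 2$ suffices and no lower‑bound work is needed. First I would take $D_0=V(G)$, the set of branch vertices of $G^{\frac{1}{k}}$. For a superedge $P^{\{v_i,v_j\}}$ with internal vertices $x^{\{v_i,v_j\}}_1,\ldots,x^{\{v_i,v_j\}}_{k-1}$, note that every internal vertex has degree $2$ in $G^{\frac{1}{k}}$, while $\deg_{G^{\frac{1}{k}}}(v_i)=\deg_G(v_i)\ge \delta(G)\ge 2$; hence $v_i$ strongly dominates $x^{\{v_i,v_j\}}_1$ and $v_j$ strongly dominates $x^{\{v_i,v_j\}}_{k-1}$, because in each case the dominated vertex has degree $2$ and the dominator has degree at least $2$. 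This immediately settles $k=2$ and $k=3$: for $k=2$ the only internal vertex of each superedge is $x_1$, and for $k=3$ the internal vertices are $x_1$ (dominated by $v_i$) and $x_2$ (dominated by $v_j$), so $D_0$ is a strong dominating set and $\gamma_{st}(G^{\frac{1}{k}})\le n$.

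For $k\ge 4$ I would then deal with the remaining internal vertices of each superedge. After $D_0$ handles $x_1$ and $x_{k-1}$, the vertices $x^{\{v_i,v_j\}}_2,\ldots,x^{\{v_i,v_j\}}_{k-2}$ are left to be dominated; they induce a path on $k-3$ vertices, all of degree $2$ in $G^{\frac{1}{k}}$, and their neighbours inside the superedge also have degree $2$. Therefore a $\gamma_{st}$-set of this path $P_{k-3}$—of size $\left\lceil\frac{k-3}{3}\right\rceil$ by the elementary fact $\gamma_{st}(P_t)=\lceil t/3\rceil$ recalled earlier—placed on these internal vertices strongly dominates $x_2,\ldots,x_{k-2}$; the extra adjacency at the two ends of the segment only helps, and $x_1,x_{k-1}$ are already dominated. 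Adjoining such a set for each of the $m$ superedges to $D_0$ produces a strong dominating set $D$ of $G^{\frac{1}{k}}$ with $|D|=n+m\left\lceil\frac{k-3}{3}\right\rceil$, which is the claimed bound.

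The argument is essentially that of Theorem \ref{k-sub}, so the write‑up is really just a verification that the constructed set $D$ is strong dominating together with the cardinality count. The one place where a little care is needed—and the main difference from the $\delta(G)\ge 3$ case—is checking that a branch vertex $v_i$ of degree exactly $2$ (now permitted) still strongly dominates its two neighbouring internal vertices; this is fine precisely because the definition of strong domination allows $\deg(x)=\deg(y)$. No matching lower bound holds in general (for instance $C_n^{\frac{1}{k}}=C_{nk}$ has $\gamma_{st}=\lceil nk/3\rceil$, which can be strictly smaller), which is exactly why $\delta(G)\ge 3$ is required for equality in Theorem \ref{k-sub}.
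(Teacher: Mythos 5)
Your construction is exactly the paper's: the paper proves Theorem \ref{k-sub-2} by remarking that the argument of Theorem \ref{k-sub} (all branch vertices plus a $\left\lceil\frac{k-3}{3}\right\rceil$-set on the remaining internal path of each superedge) still yields the upper bound when $\delta(G)\geq 2$, which is precisely what you verify, including the key observation that a branch vertex of degree $2$ still strongly dominates its neighbouring internal vertices of degree $2$. Your proposal is correct and takes essentially the same approach.
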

 
The following example shows that for some graphs and some $k\in\mathbb{N}\setminus\{1\}$, the equality holds, and for some it does not.

 \begin{example}
 	{\normalfont
Let $G=C_5$. Then one can easily check that $	\gamma_{st}(G^{\frac{1}{2}})=4<5$, and $	\gamma_{st}(G^{\frac{1}{k}})<n ( 1+{\left\lceil \frac{k-3}{3} \right\rceil}) $, where $k \in\mathbb{N}\setminus\{1,2,3t ~| ~t\in \mathbb{N}\}$. But, $\gamma_{st}(G^{\frac{1}{3r}})=nr$, where $r\in \mathbb{N}$, as desired.
 	}
 \end{example}

Now, we consider graphs with pendant vertices and find an upper bound for $\gamma_{st}(G^{\frac{1}{k}})$.

    \begin{theorem}\label{k-sub-3}
   	Let $G$ be a graph of order $n$, size $m$, and $t$ pendant vertices, where $1\leq t\leq n-1$.  Then, 
   	\begin{equation*}
   	\gamma_{st}(G^{\frac{1}{k}})\leq\left\{
   	\begin{array}{ll}
   	n  &\quad\mbox{if  $k=2,3$},\\
   	n+ t\left\lceil \frac{k-4}{3} \right\rceil + (m-t) \left\lceil \frac{k-3}{3} \right\rceil   &\quad\mbox{otherwise}.
   	\end{array}\right.
   	\end{equation*}
   \end{theorem}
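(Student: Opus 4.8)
The plan is to construct an explicit strong dominating set of $G^{\frac1k}$ of the stated size, refining the arguments of Theorems \ref{k-sub} and \ref{k-sub-2} by handling the superedges incident with a pendant vertex separately. Let $L$ be the set of the $t$ pendant vertices of $G$; for $v\in L$ write $vw$ for its unique incident edge and $x^v$ for the internal vertex of the superedge $P^{\{v,w\}}$ that is adjacent to $v$ in $G^{\frac1k}$. I would begin by remarking that the bound is intended for graphs in which every pendant vertex has a non‑pendant neighbour — automatic when $G$ is connected and $t\le n-1$ — so that the $t$ pendant edges are distinct and $m-t\ge 0$; I assume this throughout.

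Next I would take
$$D=\bigl(V(G)\setminus L\bigr)\cup\{x^v:v\in L\}\cup A,$$
where $A$ is built superedge by superedge. On an \emph{ordinary} superedge (both endpoints outside $L$) the two endpoints lie in $D$ with degree $\ge 2$ in $G^{\frac1k}$ and strong dominate the two extreme internal vertices, so exactly as in Theorem \ref{k-sub-2} it suffices to put into $A$ a minimum dominating set, of size $\lceil\frac{k-3}{3}\rceil$, of the path of order $k-3$ induced by the remaining internal vertices (all of degree $2$, so strong domination there is ordinary domination). On a \emph{pendant} superedge $v-x^v-x_2-\cdots-x_{k-1}-w$ with $v\in L$, the vertex $x^v\in D$ has degree $2$ and hence strong dominates both $v$ (degree $1$) and $x_2$, while $w\in D$ strong dominates $x_{k-1}$; thus it is enough to add to $A$ a minimum dominating set, of size $\lceil\frac{k-4}{3}\rceil$, of the path of order $k-4$ induced by $x_3,\dots,x_{k-2}$ (empty when $k\le 4$).

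It then remains to verify that $D$ is a strong dominating set — every non‑pendant original vertex is in $D$, every pendant vertex is strong dominated by its $x^v$, and every internal vertex is either in $D$ or strong dominated by an adjacent original vertex of degree $\ge 2$, by the adjacent $x^v$, or, inside a leftover path of degree‑$2$ vertices, by a chosen dominator — and to count: $|D|=(n-t)+t+(m-t)\lceil\frac{k-3}{3}\rceil+t\lceil\frac{k-4}{3}\rceil=n+t\lceil\frac{k-4}{3}\rceil+(m-t)\lceil\frac{k-3}{3}\rceil$, which also gives the value $n$ for $k=2,3$ since then both ceilings vanish.

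The routine ingredients are the identity $\gamma(P_\ell)=\lceil\ell/3\rceil$ and checking the degree inequalities (all easy, since every internal vertex has degree $2$ and either dominates another degree‑$2$ internal vertex or the degree‑$1$ pendant vertex next to it). The one point that really needs care — and the source of the asymmetric ceilings $\lceil\frac{k-4}{3}\rceil$ versus $\lceil\frac{k-3}{3}\rceil$ — is the treatment of a pendant end: a pendant vertex has degree $1$ in $G^{\frac1k}$, so it can never strong dominate the adjacent internal vertex $x^v$, which forces $x^v$ itself into $D$; doing so keeps the ``original‑vertex budget'' equal to $n$ while letting $x^v$ absorb one extra internal vertex ($x_2$), which is exactly why the leftover path on a pendant superedge is one vertex shorter than on an ordinary one.
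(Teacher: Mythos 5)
Your argument is essentially the paper's own proof: on each pendant superedge you place the internal vertex adjacent to the leaf (instead of the leaf) into the set, so it strong dominates the leaf and one extra internal vertex and leaves a path of order $k-4$, while ordinary superedges are treated exactly as in Theorems \ref{k-sub} and \ref{k-sub-2} with a leftover path of order $k-3$, and the count gives the stated bound. This is correct; your explicit remark that each pendant vertex must have a non-pendant neighbour (no $K_2$-type superedge) is only a small clarification of an assumption the paper leaves implicit.
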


   \begin{proof}
Suppose that $v_iv_j\in E(G)$, and $v_i$ is a pendant vertex. First, let $k=2$. Then, $P^{\{v_i,v_j\}}$ consists of vertices $v_i$, $x^{\{v_i,v_j\}}_1$, and $v_j$. Since $\deg (x^{\{v_i,v_j\}}_1)=2$ and $\deg (v_i)=1$, then we should have $x^{\{v_i,v_j\}}_1$ in our strong dominating set. So the set $S$ containing  these vertices and non-pendant vertices of $G$, is a strong dominating set and we are done.  By the same argument, we have $\gamma_{st}(G^{\frac{1}{3}})\leq n$, too. Now consider the graph $G^{\frac{1}{k}}$, where $k\geq 4$. The superedge  $P^{\{v_i,v_j\}}$ consists of vertices 
$v_i, x^{\{v_i,v_j\}}_1, x^{\{v_i,v_j\}}_2,\ldots, x^{\{v_i,v_j\}}_{k-1}, v_j$.
By the same argument as cases $k=2,3$, we pick  $x^{\{v_i,v_j\}}_1$ and $v_j$ in our strong dominating set, and they  strong dominate vertices $v_i$ and $x^{\{v_i,v_j\}}_{k-1}$, respectively. Now, for the rest of vertices of $P^{\{v_i,v_j\}}$, we have a path graph of order $k-4$, and since we need $\left\lceil \frac{k-4}{3} \right\rceil$ vertices among them to have a strong dominating set for this path, then by adding cases when we do not have a pendant vertex as endpoint of an edge (same argument as proof of Theorem \ref{k-sub}), we have the result. 
   	\qed
   \end{proof}

 \begin{remark}
 {\normalfont
 The upper bound in the Theorem \ref{k-sub-3}  is tight, if $k\equiv 0$ $(mod\,3)$.  It suffices to consider $G$ as the path graph $P_4$.
 }
  \end{remark}

 \end{document}